\def\qed{\hfill \ifhmode\unskip\nobreak\fi\quad\ifmmode\Box\else$\hfill\Box$\fi\\ }
\newtheorem{thrm}{Theorem}[section]
\newtheorem{lemm}[thrm]{Lemma}
\newtheorem{propo}[thrm]{Proposition}
\newtheorem{coro}[thrm]{Corollary}
\newtheorem{defi}[thrm]{Definition}
\newcommand{\thm}{\begin{thrm}}
\newcommand{\xthm}{\end{thrm}}
\newcommand{\lem}{\begin{lemm}}
\newcommand{\xlem}{\end{lemm}}
\newcommand{\prf}{\begin{proof}}
\newcommand{\xprf}{\end{proof}}
\newcommand{\prop}{\begin{propo}}
\newcommand{\xprop}{\end{propo}}
\newcommand{\cor}{\begin{coro}}
\newcommand{\xcor}{\end{coro}}
\newcommand{\defn}{\begin{defi}}
\newcommand{\xdefn}{\end{defi}}
\newcommand{\conj}{\begin{conjecture}}
\newcommand{\xconj}{\end{conjecture}}
\renewcommand{\phi}{\varphi}
\newcommand{\cH}{\mathcal{H}}
\newcommand{\cF}{\mathcal{F}}
\newtheorem{theorem}{Theorem}
\newtheorem{conjecture}{Conjecture}
\newtheorem{definition}{Definition}
\newcommand{\twr}{{{\rm twr}}}
\begin{document}

\title{\vspace{-0.5in} The Erd\H os-Hajnal hypergraph Ramsey problem}

\author{Dhruv Mubayi\thanks{Department of Mathematics, Statistics, and Computer Science, University of Illinois, Chicago, IL, 60607 USA.  Research partially supported by NSF grant DMS-1300138. Email: {\tt mubayi@uic.edu}} \and Andrew Suk\thanks{Department of Mathematics, Statistics, and Computer Science, University of Illinois, Chicago, IL, 60607 USA. Supported by NSF grant DMS-1500153. Email: {\tt suk@uic.edu}.}}

\maketitle

\vspace{-0.3in}

\begin{abstract}
Given integers $2\le t \le k+1  \le n$, let $g_k(t,n)$ be the minimum $N$ such that every red/blue coloring of the $k$-subsets of $\{1, \ldots, N\}$ yields either a $(k+1)$-set containing $t$ red $k$-subsets, or an $n$-set with all of its $k$-subsets  blue. Erd\H os and Hajnal proved in 1972  that for fixed $2\le t \le k$, there are positive constants $c_1$ and $c_2$ such that
$$ 2^{c_1 n} < g_k(t, n) < \twr_{t-1} (n^{c_2}),$$
where $\twr_{t-1}$ is a tower of 2's of height $t-2$. They conjectured that the tower growth rate in the upper bound is correct.  Despite decades of work on closely related and special cases of this problem by many researchers, there have been no improvements of the lower bound for $2<t<k$. Here we settle the Erd\H os-Hajnal conjecture in almost all cases in a strong form, by  determining the correct tower growth rate, and in half of the cases we also determine the correct power  of $n$ within the tower.  Specifically, we prove  that if $2<t<k-1$ and $k - t$ is even, then
$$g_k(t, n) = \twr_{t-1} ( n^{k-t+1 + o(1)} ).$$
Similar results are proved for $k - t$ odd.

\end{abstract}

\section{Introduction}

A $k$-uniform hypergraph $H$ ($k$-graph for short) with vertex set $V$ is a collection of $k$-element subsets of $V$.  We write $K^k_n$ for the complete $k$-uniform hypergraph on an $n$-element vertex set.  Given two families of $k$-graphs $\mathcal{F}$, $\mathcal{G}$, the \emph{Ramsey number} $r(\mathcal{F}, \mathcal{G})$ is the minimum $N$ such that every red/blue coloring of the edges of $K^k_N$ results in a monochromatic red copy of $F \in \mathcal{F}$ or a monochromatic blue copy of $G \in \mathcal{G}$.  In order to avoid the excessive use of superscripts, we use the simpler notation

$$r_k(\mathcal{F},n) = r(\mathcal{F},K^k_n) \hspace{1cm}\textnormal{and}\hspace{1cm} r_k(s,n) = r(K^k_s,K^k_n).$$

 Estimating the Ramsey number $r_k(s,n)$ is a fundamental problem in combinatorics and has been extensively studied since 1935.  For graphs, classical results of Erd\H os \cite{E47} and Erd\H os and Szekeres \cite{ES35} imply that $2^{n/2} < r_2(n,n) < 2^{2n}$.  For $k$-graphs with $k\geq 3$, Erd\H os, Hajnal, and Rado \cite{EHR,EH72} showed that there are positive constants $c_1$ and $c_2$ such that
$$\twr_{k-1}(c_1n^2) \leq r_k(n,n) \leq \twr_k(c_2n),$$

\noindent  where the \emph{tower function} is defined recursively as $\twr_1(x) = x$ and $\twr_{i + 1} = 2^{\twr_i(x)}.$  It is a major open problem to determine if $r_k(n,n) \geq \twr_k(cn)$ and Erd\H os offered a \$500 reward for a proof (see~\cite{Chung}).

In order to shed more light on these questions,  Erd\H os and Hajnal~\cite{EH72} in 1972 considered the  following more general parameter.

\begin{definition} For integers $2\le k < s <n$ and $2 \le t \le {s \choose k}$, let $r_k(s,t;n)$ be the minimum $N$ such that every red/blue coloring of  the edges of $K^k_N$ results in a monochromatic blue copy of $K_n^k$ or has a set of $s$ vertices which contains at least $t$ red edges.
\end{definition}
Note that by definition $r_k(s,n) = r_k(s, {s \choose k}; n)$ so $r_k(s,t;n)$ includes classical Ramsey numbers.


 The main conjecture of Erd\H os and Hajnal states that as $t$ grows from $1$ to ${s\choose k}$, there is a well-defined value $t_1=h_1^{(k)}(s)$ at which $r_k(s,t_1-1;n)$ is polynomial in $n$ while $r_k(s,t_1;n)$ is exponential in a power of $n$, another well-defined value $t_2=h_2^{(k)}(s)$ at which it changes from exponential to double exponential in a power of $n$ and so on,  and finally a well-defined value $t_{k-2}=h_{k-2}^{(k)}(s)<{s \choose k}$
  at which it changes from $\twr_{k-2}$ to $\twr_{k-1}$ in a power of $n$. They were not able to offer a conjecture as to what $h_i^{(k)}(s)$ is in general, except when $i=1$ and when $s=k+1$.

  $\bullet$ When $i=1$, they conjectured that $t_1=h_1^{(k)}(s)$ is one more than the number of edges in the $k$-graph obtained from a complete $k$-partite $k$-graph on $s$ vertices with almost equal part sizes, by repeating this construction  recursively within each part. Erd\H os offered \$500 for a proof of this (see \cite{Chung}).

 $\bullet$ When $s=k+1$, they conjectured that $h_i^{(k)}(k+1)=i+2$ and proved this for $i=1$ via the following:

\begin{theorem} [Erdos-Hajnal~\cite{EH72}] \label{eh}
For $k \ge 3$, there are  positive $c= c(k)$ and $c'=c'(k)$ such that
$$r_k(k+1, 2; n) < cn^{k-1} \qquad \hbox{ and } \qquad
r_k(k+1, 3; n) > 2^{c'n}.$$
\end{theorem}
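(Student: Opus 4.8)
The two inequalities are proved by unrelated methods, so I treat them one at a time.

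\textbf{Upper bound $r_k(k+1,2;n)<cn^{k-1}$.} Suppose $N\ge cn^{k-1}$ and fix a red/blue colouring of $K_N^k$ in which no $(k+1)$-set carries two red edges. The first step is the observation that every $(k-1)$-set $T\subseteq[N]$ lies in at most one red edge, since two red edges $T\cup\{u\},T\cup\{v\}$ would make $T\cup\{u,v\}$ a $(k+1)$-set with two red edges. Hence the red $k$-graph $R$ is a partial Steiner system, and double-counting incidences of $(k-1)$-sets with red edges gives $|R|\le\binom{N}{k-1}/k\le N^{k-1}/k!$. The second step is to extract a blue $K_n^k$, i.e.\ an independent set of $R$, by random sampling with deletion: keep each vertex independently with probability $p$ and then delete one vertex from each surviving red edge; the expected size of the surviving red-free set is at least $pN-p^k|R|\ge pN-p^kN^{k-1}/k!$, which for $p$ of order $N^{-(k-2)/(k-1)}$ is $\Omega\!\left(N^{1/(k-1)}\right)$. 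Choosing $c=c(k)$ large enough makes this at least $n$, so the surviving set is a blue $K_n^k$. I do not expect any real difficulty here.

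\textbf{Lower bound $r_k(k+1,3;n)>2^{c'n}$.} Here one must produce, for every $n$, a red/blue colouring of $K_N^k$ with $N=2^{c'n}$ such that (i) every $(k+1)$-set contains at most two red edges and (ii) there is no blue $K_n^k$. My plan is a single step of ``stepping up''. Take the vertex set to be $\{0,1\}^m$ ordered as integers (so $N=2^m$) with $m=c'n$, and write $\delta(v,w)$ for the position of the most significant bit in which $v,w$ differ; recall that for $v_1>v_2>v_3$ one has $\delta(v_1,v_2)\ne\delta(v_2,v_3)$ and $\delta(v_1,v_3)=\max(\delta(v_1,v_2),\delta(v_2,v_3))$. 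Fix a \emph{cheap} ground colouring $\chi$ of $K_m^{k-1}$ --- living on only $m=\Theta(n)$ vertices, so its largest blue clique is automatically $O(n)$ --- which satisfies a mild local condition (for $k=3$ the right condition is: if a pair $\{b,c\}$ with $b<c$ is red, then every pair $\{a,b\},\{a,c\}$ with $a<b$ is blue --- a perfect matching works; for larger $k$ a tight-cycle-type colouring should serve). Colour a $k$-set $v_1>\dots>v_k$ by examining the gap sequence $\delta_i=\delta(v_i,v_{i+1})$: declare it red only in certain configurations determined by the shape (ascents/descents) of $\delta_1,\dots,\delta_{k-1}$, and in those cases by $\chi$ evaluated on the associated set of $\delta$-values. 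Then (i) is verified by noting that in a $(k+1)$-set the $k+1$ subsets of size $k$ arise from the gap sequence $\delta_1,\dots,\delta_k$ by merging two consecutive $\delta_j$'s into their maximum or deleting an endpoint, so their colours are controlled entirely by the relative order of $\delta_1,\dots,\delta_k$; a case analysis over the order types shows at most two subsets can be red provided $\chi$ obeys the local condition. Condition (ii) follows from the standard stepping-up phenomenon: a blue $K_n^k$ forces the gap sequence along the clique to be tame enough (no forbidden local extrema), so that its distinct $\delta$-values span a blue clique in $\chi$ of size comparable to $n$, which is impossible once $c'$, hence $m$, is small relative to the blue clique number of $\chi$.

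\textbf{Main obstacle.} The crux --- the real content of the lower bound --- is arranging the colour rule so that (i) and (ii) hold simultaneously, since they pull in opposite directions: (ii) wants the red $k$-graph globally dense, while (i) forces it to be locally extremely sparse. The delicate point is that for some gap sequences several of the $k+1$ $k$-subsets of a $(k+1)$-set collapse to the same gap data --- for a monotone sequence, deleting $v_1$ or $v_2$ yields the identical sequence --- so a naive rule like ``red iff the gap sequence ascends'' turns an entire ascending $(k+1)$-set red and violates (i). The rule must instead be set up so that such a ``collapsed'' $k$-subset is red only when every other $k$-subset of that $(k+1)$-set is blue; achieving this typically means splitting ``red'' into sub-rules keyed to the ascent/descent pattern, each consulting a separate copy of $\chi$, together with compatibility conditions among the copies (precisely the constraints that surface when one works out the $k=3$ case, e.g.\ that no vertex should play certain conflicting roles in two copies). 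Threading this needle, handling the non-monotone order types, and making the stepping-up estimate for (ii) quantitative --- including pinning down, for each $k$, the cheap ground colouring $\chi$ and the exact local condition it must satisfy --- is where the work lies.
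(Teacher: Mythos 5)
Your upper-bound argument is correct and essentially complete: the hypothesis that no $(k+1)$-set carries two red edges is exactly the statement that every $(k-1)$-set lies in at most one red edge, so the red $k$-graph is a partial $(k-1,k)$ Steiner system with at most $\binom{N}{k-1}/k\le N^{k-1}/k!$ edges, and the sampling-plus-deletion computation (with $p$ of order $N^{-(k-2)/(k-1)}$) does give a blue clique of size $\Omega(N^{1/(k-1)})$, hence $r_k(k+1,2;n)=O(n^{k-1})$. (Note the paper itself does not reprove this theorem; it is quoted from \cite{EH72}, and only the lower bound is later reproved, in a stronger form.)

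The lower bound, however, is where all the content of the statement lies, and your proposal does not prove it: the colouring rule is never specified (``declare it red only in certain configurations \dots and in those cases by $\chi$''), the ground colouring and its ``local condition'' are left open for $k>3$, and the simultaneous verification of (i) at most two red edges per $(k+1)$-set and (ii) no blue $K_n^k$ is exactly the step you defer to ``where the work lies''. That is the theorem, not a detail. Moreover, the stepping-up route you sketch is the one the paper deliberately avoids for $t=3$: its stepping-up lemmas (Theorems~\ref{stepupeasy} and~\ref{3to4odd}) genuinely need $t\ge 4$, because with zigzag-type red edges one cannot in general exclude three red edges inside a $(k+1)$-set, and the ``collapsed'' monotone edges you worry about are only half the problem. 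The paper instead handles $t=3$ (Theorem~\ref{ehimprove}, which is stronger than what you need, giving $2^{c_kn^{k-2}}$) by a one-shot construction with no stepping up: colour the $(k-1)$-sets of $[N]$ randomly with $k$ colours $\phi$, and declare a $k$-set $e$ red iff $\phi(S)=\mathrm{rank}_e(S)$ for every $(k-1)$-subset $S\subset e$. Then three red edges $e_i,e_j,e_l$ in a $(k+1)$-set force a single $(k-1)$-set $S$ to satisfy two incompatible rank equations, so (i) holds \emph{deterministically}, and (ii) holds in expectation because the $k$-subsets of any $n$-set lying in a partial Steiner system of size $\Theta(n^{k-1})$ receive independent colours. (For $k=3$ there is an even more classical alternative: red $=$ cyclic triangle of a random tournament; a $4$-set has at most two cyclic triangles and blue cliques are transitive subtournaments, of size $O(\log N)$.) If you wish to rescue your sketch, you must exhibit the rule explicitly and carry out the full $(k+1)$-set case analysis; as it stands there is a genuine gap, and the evidence from the paper is that this particular needle ($t=3$ via stepping up) may not be threadable at all.
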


  They also stated that the methods of Erd\H os and Rado \cite{ER} show that for $4 \le t \le k+1$ there exists $c=c(k,t)>0$ such that
\begin{equation} \label{ehupper} r_k(k+1, t; n) \leq \twr_{t-1}(n^c).
\end{equation}
 Hence in  our notation their conjecture for $s=k+1$ is the following lower bound:

\begin{conjecture} [Erdos-Hajnal~\cite{EH72}] \label{ehconj}
For $2 \le t \le k$, there exists  $c=c(k,t)>0$ such that
$$r_k(k+1, t; n) \geq \twr_{t-1}(c \, n).$$
\end{conjecture}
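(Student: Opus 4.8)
The plan is to prove the lower bound by induction on $t$, taking the base case $t=3$ from Theorem~\ref{eh} and using, for the inductive step, a single ``stepping‑up'' construction that simultaneously raises the uniformity from $k-1$ to $k$, the threshold from $t-1$ to $t$, and the Ramsey bound by one exponential. Concretely I would prove the following stepping‑up lemma (for $4\le t\le k$): \emph{if} there is a red/blue coloring $\phi$ of the $(k-1)$-subsets of $\{1,\ldots,M\}$ such that \textup{(P1)} every $k$-set spans at most $t-2$ red $(k-1)$-subsets and \textup{(P2)} every $n'$-set spans at least one red $(k-1)$-subset, \emph{then} there is a red/blue coloring $\psi$ of the $k$-subsets of $\{0,1\}^M$ such that \textup{(Q1)} every $(k+1)$-set spans at most $t-1$ red $k$-subsets and \textup{(Q2)} every $n$-set spans a red $k$-subset, where $n'\ge n-O_{k,t}(1)$. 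Taking $M=r_{k-1}(k,t-1;n')-1$ gives $r_k(k+1,t;n)>2^{\,r_{k-1}(k,t-1;n')-1}$, and iterating $t-3$ times down to the base estimate $r_{k-t+3}(k-t+4,3;m)>2^{c'm}$ of Theorem~\ref{eh} (applicable since $k-t+3\ge 3$), then absorbing the accumulated constants into a single $c=c(k,t)$, yields $r_k(k+1,t;n)\ge \twr_{t-1}(c\,n)$; the cases $t=2,3$ are immediate.

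For the construction, order $\{0,1\}^M$ anti‑lexicographically: for $u\ne w$ write $\delta(u,w)=\max\{i:u_i\ne w_i\}$ and declare $u<w$ if $u_{\delta(u,w)}<w_{\delta(u,w)}$. For a $k$-set $S=\{v_1<\cdots<v_k\}$ put $\vec\delta(S)=(\delta_1,\ldots,\delta_{k-1})$ with $\delta_i=\delta(v_i,v_{i+1})$, and recall the standard facts that $\delta(v_i,v_j)=\max\{\delta_i,\ldots,\delta_{j-1}\}$, that consecutive $\delta_i$ are distinct, and that the maximum over any consecutive block of $\delta$'s is attained at a unique index (from $\delta(v_a,v_b)\ne\delta(v_b,v_c)$ for $a<b<c$). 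Call $S$ \emph{monotone} if $\vec\delta(S)$ is strictly monotone; then its entries form a $(k-1)$-set and I set $\psi(S)=\phi(\{\delta_1,\ldots,\delta_{k-1}\})$. The real content is how to color the \emph{non}-monotone $k$-sets, by some auxiliary rule $\rho$; this is where I expect essentially all of the difficulty, and where the parity of $k-t$ should enter.

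To verify \textup{(Q1)}, fix a $(k+1)$-set $W=\{v_0<\cdots<v_k\}$ with $\vec\delta(W)=(\delta_1,\ldots,\delta_k)$, and note that for every $j$ the difference sequence of $W\setminus\{v_j\}$ is $\vec\delta(W)$ with one entry deleted --- for interior $j$ the entry $\min(\delta_j,\delta_{j+1})$ (the pair being replaced by its maximum), and at the ends $\delta_1$ respectively $\delta_k$. If $\vec\delta(W)$ is monotone, every $W\setminus\{v_j\}$ is monotone too, and the resulting map from $\{0,\ldots,k\}$ to the $(k-1)$-subsets of the $k$-set $\{\delta_1,\ldots,\delta_k\}$ hits one subset twice and the other $k-1$ subsets once each; by \textup{(P1)} the red $k$-subsets of $W$ then number at most $1+(t-2)=t-1$. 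If $\vec\delta(W)$ is not monotone, a short case analysis shows that deleting a single entry of $\vec\delta(W)$ can restore monotonicity only when all interior extrema of $\vec\delta(W)$ lie in three consecutive positions surrounding one ``valley'' entry, and that then at most two of the $W\setminus\{v_j\}$ are monotone and any two of them share a difference set; hence monotone $k$-subsets contribute at most $2$ red edges, and it is enough to ask of $\rho$ that \emph{at most $t-3$ of the non‑monotone $k$-subsets of any $(k+1)$-set be $\rho$-red}.

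For \textup{(Q2)}, suppose $U=\{w_1<\cdots<w_n\}$ spans no red $k$-set. Then $\rho$ is blue on every non‑monotone $k$-subset of $U$; but if the consecutive‑difference sequence $(\delta(w_i,w_{i+1}))_{i=1}^{n-1}$ had an interior local extremum, a block of $k$ consecutive vertices of $U$ straddling it would be a non‑monotone $k$-subset, so I need $\rho$ to be red on at least one such subset --- hence that sequence must be strictly monotone, i.e.\ $U$ is ``monotone‑structured.'' Then, writing $D=\{\delta(w_i,w_{i+1}):1\le i\le n-1\}$ (an $(n-1)$-set), every $(k-1)$-subset of $D$ is realized as the difference set of a monotone $k$-subset of $U$, so all these are $\phi$-blue and $D$ is a $\phi$-blue clique of size $n-1$, contradicting \textup{(P2)} once $n-1\ge n'$. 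The crux --- and the step I expect to be genuinely hard --- is therefore the simultaneous design of $\rho$: it must be \emph{sparse} (at most $t-3$ red non‑monotone $k$-subsets in each $(k+1)$-set, so that \textup{(Q1)} holds) and \emph{covering} (a red non‑monotone $k$-subset inside every $n$-set that is not monotone‑structured, so that \textup{(Q2)} holds). A natural attempt is to let $\rho(S)$ be determined by iteratively contracting the ``valleys'' of $\vec\delta(S)$ to obtain a shorter monotone‑type core on which one evaluates $\phi$; each contraction changes the core length by a fixed parity, which is the mechanism splitting the analysis --- and the sharp bounds in the paper --- according to whether $k-t$ is even or odd, and making this sparse‑versus‑covering balance work is where the main work of the proof lies.
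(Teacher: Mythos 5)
Your plan reproduces the skeleton of the paper's argument -- iterate a stepping-up lemma from uniformity $k-1$, threshold $t-1$ to uniformity $k$, threshold $t$, color monotone $\delta$-sequences by the lower-level coloring $\phi$, and handle non-monotone $k$-sets by an extra rule -- but it stops exactly where the paper's actual contribution begins. The rule $\rho$ on non-monotone $k$-sets is not constructed: you state the two properties it must have (sparse in every $(k+1)$-set, covering in every vertex set whose $\delta$-sequence is far from monotone) and acknowledge that designing it ``is where the main work of the proof lies.'' In the paper this is the zigzag coloring ($\delta_1>\delta_2<\delta_3>\cdots$ declared red), and the heart of the proof is the verification that it works: Claim 1 showing a red monotone edge and a red zigzag edge cannot share $k-1$ vertices (this needs $k\ge 6$), the case analysis (Cases 1--3 of Theorem~\ref{stepupeasy}) showing no $(k+1)$-set carries $t$ red edges, the argument that a blue $2kn$-set would contain either a long monotone $\delta$-run or $k$ consecutive alternations and hence a red zigzag, and the separate strong-zigzag construction of Theorem~\ref{3to4odd} for the case $t=4$, $k$ odd. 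None of this is supplied or replaced by an alternative, so the proposal is a program rather than a proof.

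Two further points suggest the program, as specified, would not go through verbatim. First, your sufficient condition on $\rho$ (at most $t-3$ red non-monotone $k$-subsets in every $(k+1)$-set, together with at most $2$ red monotone ones) is stronger than what the paper's coloring satisfies: for $t=4$ you would need at most one red zigzag edge per $(k+1)$-set, whereas the paper only excludes four and instead uses Claim 1 to prevent mixing of monotone and zigzag red edges; likewise your covering demand (a red non-monotone $k$-subset in \emph{every} $n$-set that is not monotone-structured) is stronger than what the zigzag rule delivers, which is why the paper works with $2kn$ (resp.\ $4n^2$) vertices rather than $n+O(1)$. Second, the claimed range $2\le t\le k$ exceeds what this method is known to give: the stepping-up lemma requires the current uniformity to be at least $6$, so the iteration only reaches $t\le k-2$, and for $t\in\{k-1,k\}$ even the paper obtains only $\twr_{k-3}$-type lower bounds, leaving Conjecture~\ref{ehconj} open there. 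So beyond the missing construction, the induction as you set it up could not be carried down to the smallest uniformities without new ideas.
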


Another important special case of the Erd\H os-Hajnal problem is when $s$ is fixed and $t={s \choose k}$. Here we are interested in determining the correct tower growth rate for the \emph{off-diagonal} Ramsey number $r_k(s,n)$ as $n$ grows.  It follows from well-known results that $r_k(s,n) \leq \twr_{k-1}(n^{c})$ where $c = c(k,s)$ (see \cite{AKS,B,BK,ER} for the best known bounds) and the Erd\H os-Hajnal conjecture  implies  that for all $s\geq k+1$ there exists $c'>0$ such that
$$r_{k}(s,n) \ge r_k(s, h_{k-2}^{(k)}(s);n)> \twr_{k-1}(c'n).$$

The Erd\H os-Hajnal stepping up lemma shows this for all $s \geq 2^{k-1} - k + 3$ and Conlon, Fox, and Sudakov~\cite{CFS13} improved this to $s\geq \lceil 5k/2\rceil - 3$. 
 Recently the current authors  \cite{MS2}, and independently Conlon, Fox, and Sudakov \cite{CFS1}, further improved this to $s\geq k+3$.  For the remaining two values $s = k+2$ and $k+1$ the current authors  improved the previous best bounds to $\twr_{k-2}(n^{c\log  n})$ and $\twr_{k-2}(n^{c\log\log n})$ respectively \cite{MS1,MS2}.

As we have indicated, the function $r_k(s,t;n)$ encompasses several fundamental problems which have been studied for a while. In addition to off-diagonal and diagonal Ramsey numbers already mentioned, the case
$(k,s,t,n)=(k, k+1, k+1, k+1)$ has been studied in the context of the Erd\H os-Szekeres theorem and Ramsey numbers of ordered tight paths by several researchers~\cite{DLR, EM, FPSS, MS, MSW}, the more general case $(k,k+1, k+1, n)$ is related to high dimensional tournaments~\cite{LM}, and
even the very special case
$(3, 4, 3, n)$ has tight connections to quasirandom hypergraph constructions~\cite{BR, KNRS, LM1, LM2}. Lastly, $h_1^{(3)}(s)$ is quite well understood due to results of Erd\H os-Hajnal~\cite{EH72} and Conlon-Fox-Sudakov~\cite{CFS}.
In spite of this activity,  no lower bound better than
\begin{equation} \label{lower} r_k(k+1, t; n) \geq 2^{cn}\end{equation}
has been proved for a single pair $(k,t)$ with $2<t<k$.

In this paper we prove such lower bounds and settle Conjecture~\ref{ehconj} in almost all cases, while also improving the upper bounds. In half of the cases we even obtain the correct power of $n$ within the tower thereby improving Theorem~\ref{eh}, (\ref{ehupper}) and
(\ref{lower}).   First, we state our result when $t=3$.
\medskip

\begin{theorem}\label{t3}

For $k\geq 3$, there are positive  $c = c(k)$ and $c' = c'(k)$ such that
$$2^{c'n^{k-2}\log n } \geq r_k(k + 1,3;n) \geq 2^{cn^{k-2}}.$$
\end{theorem}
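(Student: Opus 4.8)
The plan is to prove the two bounds separately. For the upper bound $r_k(k+1,3;n) \le 2^{c'n^{k-2}\log n}$, I would set $N = 2^{c'n^{k-2}\log n}$ and consider an arbitrary red/blue coloring $\chi$ of $K^k_N$ with no $(k+1)$-set carrying $3$ red edges and no blue $K^k_n$. The structural constraint is the key: if a $(k+1)$-set $S$ has at most $2$ red edges, then after deleting at most two vertices of $S$ we reach a red-free, hence entirely blue, $(k-1)$-set configuration — more usefully, within any set $W$ with all $\binom{|W|}{k}$ edges we want blue, the red edges form a very sparse pattern (each $(k+1)$-set spans $\le 2$ reds). I would pass to a large subset on which the red $k$-graph is well-controlled, using an auxiliary $(k-1)$-uniform coloring: define, for each $(k-1)$-set $T$, a $2$-coloring of the remaining $N-k+1$ vertices according to some canonical rule derived from the red edges through $T$, and apply the Erd\H os--Rado argument (as invoked in (\ref{ehupper})) one level down. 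The bookkeeping is: the ``$t=3$'' constraint at uniformity $k$ translates into a ``$t=2$''-type constraint (at most one red edge per $k$-set) at uniformity $k-1$ after the reduction, and Theorem~\ref{eh} gives $r_k(k+1,2;n) < cn^{k-1}$, which provides the polynomial base case; iterating the reduction over $\binom{N}{k-1}$ choices of $T$ costs one exponential with exponent $\Theta(n^{k-2}\log n)$, matching the claimed bound.

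For the lower bound $r_k(k+1,3;n) \ge 2^{cn^{k-2}}$, I would give an explicit (or random) construction of a red/blue coloring of $K^k_N$ with $N = 2^{cn^{k-2}}$ avoiding both forbidden configurations. The natural approach is a product/stepping-up style construction: take an optimal coloring witnessing $r_{k-1}(k,2;m) > m^{k-2}$-type behavior — actually the right base object is a coloring of $K^{k-1}_m$ with no $k$-set containing $2$ red edges and no blue $K^{k-1}_{n'}$, which by Theorem~\ref{eh} exists for $m$ up to $\approx (n')^{k-2}$ — and then lift it one level up, encoding each vertex of $K^k_N$ by a subset/vector so that $k$-sets of the new coloring read off the colors of $(k-1)$-sets of the old coloring. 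One arranges the lifting rule so that: (a) a blue $K^k_n$ upstairs would force a blue $K^{k-1}_{n-1}$ (or similar) downstairs, contradicting the base construction's off-diagonal bound; and (b) any $(k+1)$-set upstairs spans at most $2$ red edges, which is where the specific combinatorics of the encoding must be checked — typically one shows the red edges in any $(k+1)$-set all ``point to'' a single coordinate of disagreement. Taking $N \approx 2^m \approx 2^{(n')^{k-2}}$ with $n' \approx n$ yields the exponent $n^{k-2}$.

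The main obstacle I anticipate is verifying property (b) in the lower-bound construction: the stepping-up lemmas of Erd\H os--Hajnal and their refinements are delicate precisely because controlling how many red edges appear in a $(k+1)$-set (rather than merely forbidding a monochromatic clique) requires the encoding to have strong local structure — one must choose the lifting so that within any $k+1$ vertices, the pattern of ``first coordinate of difference'' is monotone or nested enough to rule out three red edges. A secondary difficulty is making the two bounds meet up to the $\log n$ factor: getting the upper bound down to $2^{c'n^{k-2}\log n}$ (rather than $\twr_2(n^c)$ with a larger power) requires that the one-level reduction be essentially lossless, i.e. that the polynomial base case from Theorem~\ref{eh} is genuinely $n^{k-1}$ and that the Erd\H os--Rado iteration is applied only $O(\log n)$ times or contributes only a $\log n$ factor in the exponent. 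I would expect the $\log n$ gap to be the honest cost of this reduction, and closing it entirely to be out of reach with this method.
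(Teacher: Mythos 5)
Both halves of your outline stop short of the decisive ingredient, and in each case the specific route you indicate does not give the stated bound. For the upper bound, the reduction you describe is the standard Erd\H os--Rado pigeonhole, i.e.\ inequality (\ref{erupper}), and its cost is far larger than you claim: one builds a vertex sequence of length $M=r_{k-1}(k,2;n-1)=\Theta(n^{k-2})$ so that the colour of each $k$-set depends only on its first $k-1$ vertices, and each new vertex is found by pigeonholing over the $2$-colourings of all $(k-1)$-subsets of the current sequence, so the argument consumes $2^{\binom{M}{k-1}}=2^{\Theta(n^{(k-1)(k-2)})}$ vertices, not $2^{\Theta(n^{k-2}\log n)}$. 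No bookkeeping makes this ``essentially lossless''; the paper obtains the exponent $n^{k-2}\log n$ only by replacing the static pigeonhole with the vertex on-line builder/painter game: Theorem~\ref{builder} gives a strategy forcing a red $F$ (two $(k-1)$-edges sharing their first $k-2$ vertices) or a blue $K^{k-1}_n$ using only $r=O(n^{k-2})$ red edges and $m=O(n^{2k-4})$ queried edges, and the weighted transfer theorem with $\alpha=n^{-2k+4}$ converts this into $r_k(\cF_3^k,n)\le 2^{O(n^{k-2}\log n)}$, the exponent being governed by $r\log(1/\alpha)+\alpha m$, i.e.\ by the number of \emph{red} queries rather than by $\binom{\#\mathrm{vertices}}{k-1}$. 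Your sketch contains no mechanism of this kind, so the claimed matching of the $\log n$ bound is a genuine gap.

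For the lower bound, two things go wrong. First, the base object you invoke (a colouring of $K^{k-1}_m$ with no $k$-set containing two red edges and no blue $K^{k-1}_n$, for $m\approx n^{k-2}$) would be a \emph{lower} bound on $r_{k-1}(k,2;n)$, whereas Theorem~\ref{eh} supplies only the upper bound $r_{k-1}(k,2;n)<cn^{k-2}$; the existence you need is nowhere provided. Second, and more fundamentally, the step you yourself flag as the obstacle --- at most two red edges in every $(k+1)$-set upstairs --- is precisely where stepping up breaks at $t=3$: any such lifting must colour red some non-monotone $\delta$-patterns (otherwise upstairs blue cliques cannot be controlled), and already the paper's carefully tuned ``zigzag'' class creates three red edges inside one $(k+1)$-set irrespective of the colouring below (for even $k$, take $\delta(a_1,\ldots,a_{k+1})$ zigzag with $\delta_{k-2}<\delta_k$; then $a-a_{k-1}$, $a-a_k$ and $a-a_{k+1}$ all have zigzag $\delta$-sequences, hence are red). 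This is why the paper's stepping-up results (Theorem~\ref{stepupeasy} and its odd-$k$ variant) start only at $t\ge 4$, and why the $t=3$ lower bound is proved by an entirely different direct construction (Theorem~\ref{ehimprove}): colour the $(k-1)$-subsets of $[N]$ randomly with $k$ colours by $\phi$, and let a $k$-set $e$ be red iff $\phi(S)=\mathrm{rank}_e(S)$ for every $(k-1)$-subset $S\subset e$; comparing the rank of the common $(k-1)$-set of the lowest- and highest-indexed red edges rules out three red edges in any $(k+1)$-set deterministically, while a partial Steiner system $Sp(k-1,k,n)$ provides $\Theta(n^{k-1})$ independent chances of a red edge inside each $n$-set, so for $N=2^{cn^{k-2}}$ some instance of $\phi$ yields no blue $K_n^k$. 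Neither this construction nor any workable substitute for it appears in your proposal.
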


Our result for larger $t$, which represents the main new advance in this work,  is similar.
\medskip

\begin{theorem} \label{main}
For $4 \le t \le k-2$, there are positive $c =c(k,t)$ and $c'=c'(k,t)$ such that
$$\twr_{t-1}(c' n^{k-t + 1}\log n) \, \ge \, r_k(k+1,t; \, n)
 \, \ge \,  \begin{cases}
\twr_{t-1}(c \, n^{k-t + 1}) \qquad  \hbox{ if $k-t$ is even}\\
\twr_{t-1}(c \, n^{(k-t + 1)/2}) \hskip8pt \hbox{ if $k-t$ is odd.}
\end{cases}
$$
\end{theorem}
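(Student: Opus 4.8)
The plan is to prove the upper and lower bounds separately, treating them as essentially independent tasks that happen to meet at the same tower height.

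\textbf{Upper bound.} For the upper bound $\twr_{t-1}(c'n^{k-t+1}\log n) \ge r_k(k+1,t;n)$, I would run the Erd\H os--Rado / Erd\H os--Hajnal ``step-up in the number of colors'' machinery more carefully than in \eqref{ehupper}, so that each application of a $(k-1)$-uniform bound feeds into a $k$-uniform one while losing only a controlled factor in the exponent. Concretely, I would set up an induction on $t$: given a red/blue coloring of $K^k_N$ with no $(k+1)$-set carrying $t$ red edges, I would fix a vertex $v$, look at the link coloring on the remaining $N-1$ vertices (a coloring of $K^{k-1}$), and observe that the ``no $t$ red edges in a $(k+1)$-set'' condition translates into a condition on the link that is one notch weaker (roughly $t-1$ in place of $t$), allowing the inductive hypothesis to produce a large blue-ish structure in the link; iterating this over a nested sequence of vertices and then cleaning up gives a blue $K^k_n$ once $N \ge \twr_{t-1}(c'n^{k-t+1}\log n)$. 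The base case is Theorem~\ref{eh} (the $t=2$ polynomial bound $cn^{k-1}$), which already has exponent $k-1 = k-t+1$ when $t=2$, and the $\log n$ factor is the usual slack one picks up from the Erd\H os--Rado pigeonhole at the top level; this also covers Theorem~\ref{t3} at $t=3$. I expect this direction to be the more routine of the two, though bookkeeping the exponent $k-t+1$ through the recursion (rather than an unspecified constant $c$) is the point that needs care.

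\textbf{Lower bound.} For the lower bound I would build an explicit (or random-algebraic) coloring of $K^k_N$ with $N$ as large as claimed that has no $(k+1)$-set with $t$ red edges and no blue $K^k_n$. The natural engine is a stepping-up construction: start from a good $2$-uniform or $3$-uniform coloring on $M$ vertices and iteratively lift it to a $k$-uniform coloring on $\twr(\cdots)$ vertices, where at each step one decodes $k$-sets by comparing the ``highest disagreement'' coordinates in a binary expansion, as in Erd\H os--Hajnal. The key point, and the reason the parity of $k-t$ enters, is how the local count of red edges inside a $(k+1)$-set behaves under stepping up: a $(k+1)$-set in the stepped-up coloring is controlled by a small configuration in the ground coloring, and the number of red edges it can carry is governed by a combinatorial quantity whose extremal value depends on whether $k-t$ is even or odd (even $k-t$ lets one use the full strength of the lift, giving exponent $k-t+1$ inside the tower; odd $k-t$ forces a two-step or ``square-root'' loss, giving $(k-t+1)/2$). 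The base coloring supplying the needed properties at the bottom of the tower would be a carefully chosen algebraic or probabilistic coloring on roughly $n^{k-t+1}$ (resp.\ $n^{(k-t+1)/2}$) vertices with no blue clique of size $n$ and the right local red-sparseness, analogous to the constructions underlying \eqref{lower} and the recent work in \cite{MS1,MS2,CFS1}.

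\textbf{Main obstacle.} The hard part will be the lower bound, and within it, controlling the local structure --- guaranteeing that \emph{every} $(k+1)$-set has at most $t-1$ red edges, not just almost all of them. In stepping-up arguments one typically controls monochromatic cliques (a ``global'' condition) but here the forbidden red configuration is tiny (only $k+1$ vertices), so the construction must be engineered so that the ``top'' coordinate of disagreement among any $k+1$ points cannot be arranged to create $t$ red $k$-subsets; this is exactly where the parity of $k-t$ becomes a genuine constraint rather than a technicality, and where one must either sacrifice a square root in the ground set size (odd case) or find an ad hoc gadget that avoids it (even case). Matching the $\log n$ upper bound slack with a construction that has no such slack, which would be needed to pin down the exact power of $n$, is presumably why the theorem is stated with $o(1)$ in the abstract and with separate constants $c, c'$ here rather than as an equality.
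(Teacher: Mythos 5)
Your upper-bound plan has a genuine quantitative gap: the induction on $t$ via vertex links is exactly the Erd\H os--Rado recursion (\ref{erupper}), and each application of it exponentiates a \emph{binomial coefficient} of the previous bound, $r_k(k+1,t;n)<2^{{r_{k-1}(k,t-1;n-1)\choose k-1}}$. If you run this all the way down to the $t=2$ base case of Theorem~\ref{eh}, the very first step up already gives $r_{k'}(k'+1,3;n)\le 2^{O(n^{(k'-2)(k'-1)})}$, so the innermost polynomial in the tower becomes roughly $n^{(k-t+1)(k-t+2)}$, not $n^{k-t+1}\log n$; this is precisely the old bound (\ref{ehupper}) with an unspecified power, and no amount of ``careful bookkeeping'' of the pigeonhole fixes it. The paper gets the correct innermost exponent by a different ingredient: an on-line builder--painter argument for \emph{ordered} $(k-1)$-graphs (Theorem~\ref{builder}, adapting Conlon--Fox--Sudakov) which yields $r_k(\cF_3^k,n)\le 2^{O(n^{k-2}\log n)}$ directly at the $t=3$ level, and only then applies (\ref{erupper}) $t-3$ times (which preserves the bottom exponent up to constants, provided one tracks the ordered families $\cF_t^k$ with a distinguished initial edge through the induction). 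The $\log n$ comes from the parameters of that game, not from ``usual pigeonhole slack,'' and your claim that your scheme also recovers Theorem~\ref{t3} fails for the same reason.

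The lower bound, which you correctly identify as the hard part, is in your write-up a plan rather than a proof, and the two ideas that actually make it work are missing. First, the base of the tower is not a generic ``red-sparse'' coloring: the paper's Theorem~\ref{ehimprove} colors the $(k-1)$-subsets of $[N]$ randomly with $k$ colors and declares a $k$-set $e$ red iff every $(k-1)$-subset $S\subset e$ has $\phi(S)=\mathrm{rank}_e(S)$; this rank-coding is what forces at most two red edges in \emph{every} $(k+1)$-set deterministically while leaving enough independence to kill blue $K_n^k$ on $2^{cn^{k-2}}$ vertices. Second, the stepping-up step (Theorems~\ref{stepupeasy} and~\ref{3to4odd}) does not use the classical coloring alone: a lifted $k$-set is colored red when its $\delta$-sequence is monotone with a red base edge \emph{or} when it is a zigzag, and the whole point is a case analysis (Claim~1 and Cases~1--3) showing that no $(k+1)$-set can carry $t$ red lifted edges, i.e.\ that the tiny local forbidden configuration survives the lift. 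The parity of $k-t$ enters at one specific place (the step from $\cH_3$ to $\cH_4$ when the relevant uniformity is odd), where the zigzag must be replaced by a ``strong-zigzag'' and the blue-clique bound degrades from $2kn$ to $4n^2$, which is the sole source of the exponent $(k-t+1)/2$; your description of parity as governing ``the full strength of the lift'' versus ``a square-root loss in the ground set'' gestures at the outcome but supplies no mechanism, and without the zigzag device and the rank-coded base case the construction you outline does not yield the stated bounds.
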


{\bf Remarks.}

$\bullet$ The lower bound in Theorem~\ref{t3} shows that the lower bound of Erd\H os and Hajnal (Theorem 9 in \cite{EH72}) is incorrect for $k\geq 4$.

$\bullet$ The basic approach for the proof of the tower lower bounds in Theorem~\ref{main} is the stepping up technique. Although  this method first appeared in 1965 (Lemma 6 of Erd\H os-Hajnal-Rado~\cite{EHR}), and has been used extensively since then by many researchers to solve various cases of the Erd\H os-Hajnal problem, it has not yielded any progress on Conjecture~\ref{ehconj}. It is the new ingredients that we add to the stepping up method that allow us to prove our lower bounds. These new ingredients have already been used successfully for the Erd\H os-Rogers problem~\cite{MS1} and we expect more applications in the future.

$\bullet$ The upper bounds in Theorem~\ref{main} also hold when $k-2<t\le k+1$.

$\bullet$ The proof of the lower bound in Theorem~\ref{main} together with the previous remark  yields
$$\twr_{k-3}(c \, n^3) \le r_k(k+1, k;\, n)  \le \twr_{k-1}(c' \, n)$$
and
$$\twr_{k-3}(c \, n^3) \le  r_k(k+1, k-1;\, n)  \le \twr_{k-2}(c' \, n^2).$$
Note that both these results are new.

\section{The upper bound}
In this section, we prove the upper bounds in Theorems \ref{t3} and \ref{main}.
The standard Erd\H os-Rado argument \cite{ER} gives an upper bound of the form
\begin{equation}\label{erupper}
r_k(k+1, t; n)< 2^{ {r_{k-1}(k, t-1; n-1) \choose k-1}}.\end{equation}
We improve this substantially in the case $t=3$ (to the optimal power of $n$) by adapting the on-line approach of Conlon, Fox, and Sudakov \cite{CFS}.

  Here we consider \emph{ordered} $(k-1)$-uniform hypergraphs $H = (V,E)$, that is, hypergraphs whose vertex set $V  = \{v_1,\ldots, v_n\}$ and $v_1 < \cdots < v_n$.  We define the ordering $\prec$ on ${V\choose k-2}$ such that for $a= (a_1,\ldots, a_{k-2})$ and $b = (b_1,\ldots, b_{k-2})$, where $a_1 < \cdots < a_{k-2}$ and $b_1< \ldots < b_{k-2}$, we have $a \prec b$ if the maximum $i$ for which $a_i \neq b_i$ we have $a_i  <b_i$.  This is the \emph{colex} order on sequences.

Consider the following game, played by two players, builder and painter:  at stage $i + 1$ a new vertex $v_{i + 1}$ is revealed; then, for every $(k-2)$-tuple $S$ among the existing vertices $v_1,\ldots, v_i$, builder decides, one by one, whether to draw the $(k-1)$-edge $S\cup \{v_{i +1}\}$; if he does expose such an edge, painter has to color it either red or blue immediately.  The exposed vertices will be naturally ordered $v_1 < v_2< \cdots < v_i$.

 Let $F$ be the ordered $(k-1)$-graph on $k$ vertices $a_1,\ldots, a_k$, where $a_1 < \cdots < a_k$, with two edges $(a_1,\ldots, a_{k-1})$ and $(a_1,\ldots, a_{k-2},a_k)$.


   Our main result in this section is the following.

\begin{theorem}\label{builder}
In the vertex on-line ordered Ramsey game, builder has a strategy which ensures a red $F$ or a blue $K^{k-1}_n$ using at most $s = O(n^{k-2})$ vertices, $r = O(n^{k-2})$ red edges, and $m = O(n^{2k-4})$ total edges.

\end{theorem}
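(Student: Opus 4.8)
The plan is to let builder maintain a growing blue clique which he extends greedily, exploiting a structural peculiarity of $F$. For a $(k-1)$-edge $e$, let $\mathrm{base}(e)$ denote the $(k-2)$-set obtained by deleting the largest vertex of $e$. Since an edge drawn at stage $i+1$ always contains the newly revealed vertex $v_{i+1}$, which is then the largest vertex present, $\mathrm{base}(e)$ is always a $(k-2)$-subset of the vertices revealed before that stage. The observation driving everything is: a red copy of $F$ is present if and only if two distinct red edges $e_1\neq e_2$ have $\mathrm{base}(e_1)=\mathrm{base}(e_2)$. Indeed, if $\mathrm{base}(e_1)=\mathrm{base}(e_2)=T$ then $e_1=T\cup\{u\}$ and $e_2=T\cup\{w\}$ with $u,w>\max T$, and taking $u<w$ the triple $(T;u,w)$ spans a red $F$; the converse is immediate. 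Consequently, as long as painter does not hand builder a red $F$ outright (if that happens, builder has already won), all red edges drawn during the game have pairwise distinct bases.

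Builder's strategy has two phases. First, using $O(1)$ vertices and edges, builder secures one blue $(k-1)$-edge: he draws $\{v_1,\dots,v_{k-1}\}$, and if it is red he reveals $v_k$ and draws $\{v_1,\dots,v_{k-2},v_k\}$, which painter must color blue (it has the same base) unless she concedes a red $F$. Let $U$ be the vertex set of the resulting blue edge, so $|U|=k-1$ and every $(k-1)$-subset of $U$ has been drawn blue. In the second phase builder repeatedly performs \emph{attempts} to enlarge $U$: in an attempt he reveals a fresh vertex $v$ and draws the edges $S\cup\{v\}$ for $S\in\binom{U}{k-2}$, one at a time in the fixed order $\prec$, stopping the moment one is colored red. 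If all of them come back blue the attempt \emph{succeeds} and $v$ is added to $U$; if one comes back red the attempt \emph{fails}, $v$ is discarded, and a new attempt begins. Builder halts once $|U|=n$.

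The analysis is bookkeeping. A failed attempt produces exactly one red edge, whose base lies in $\binom{U}{k-2}$ for the clique $U$ current at that time; since the cliques are nested and all contained in the final blue $K^{k-1}_n$ on some vertex set $U_n$ with $|U_n|=n$, every such base lies in $\binom{U_n}{k-2}$, and by the first paragraph these bases are pairwise distinct. Hence the number of failed attempts, which equals the number of red edges drawn, is at most $\binom{n}{k-2}=O(n^{k-2})$; this is the bound on $r$. Builder never stalls at a given clique $U$: once the bases used so far include all of $\binom{U}{k-2}$, painter cannot color any $S\cup\{v\}$ red without repeating a base with a previously discarded vertex, so the next attempt succeeds; therefore builder does reach a blue $K^{k-1}_n$ (or wins earlier through a red $F$). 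The number of successful attempts is exactly $n-(k-1)$, so the total number of attempts, and hence the number of revealed vertices $s$, is $O(n^{k-2})$. Finally each attempt draws at most $\binom{|U|}{k-2}\le\binom{n}{k-2}$ edges, so the total number of edges is $m=O(n^{k-2})\cdot O(n^{k-2})=O(n^{2k-4})$.

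The delicate point --- and the one that makes the exponents come out right --- is the \emph{global} accounting of red edges. The crude per-clique estimate ``at most $\binom{|U|}{k-2}$ failed attempts while $|U|$ is fixed'' sums over all clique sizes to $\Theta(n^{k-1})$, which is too large; the fix is to observe that the used bases accumulate across all clique sizes and permanently lie inside $\binom{U_n}{k-2}$, which caps the total number of red edges (hence of failed attempts, hence of revealed vertices) at $O(n^{k-2})$. Once this is in hand, the structural description of red $F$ via bases, the ``no stalling'' claim, and the base case are all routine.
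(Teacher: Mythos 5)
Your proposal is correct and follows essentially the same strategy as the paper: grow a blue clique by attaching each new vertex to all $(k-2)$-subsets of the current clique, abort an attempt at the first red edge, and observe that two red edges sharing a $(k-2)$-set base would form a red $F$, so the red edges (hence failed attempts and vertices) are capped by $\binom{n}{k-2}$. The paper phrases this bookkeeping via $R$/$B$ label strings and the colex order (distinct $R$-positions $=$ your distinct bases), but the underlying strategy and counting are the same.
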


For $t \ge 2$, let $\cF_t^k$ be the collection of ordered $k$-graphs with vertex set $a_1< \cdots <a_{k+1}$ and $t$ edges, where one of the edges is $(a_1, \ldots, a_k)$. Define $r_k(\cF_t^k,n)$ to be the minimum $N$ such that in every red/blue coloring of ${[N] \choose k}$ there is a red ordered $H \in \cF_t^k$ or a blue $K_n^k$.  We need the following result  which is a straightforward adaptation of Theorem 2.1 in \cite{CFS}.

\begin{theorem}
Suppose in the vertex on-line ordered Ramsey game that builder has a strategy which ensures a red $F$ or a blue $K^{k-1}_n$ using at most $s$ vertices, $r$ red edges, and in total $m$ edges.  Then, for any $0 < \alpha < 1/2$,
$$r_k(\cF_3^k,n) \leq O(s\alpha^{-r}(1 - \alpha)^{r-m}).$$
\end{theorem}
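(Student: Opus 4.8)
The plan is to simulate builder's guaranteed strategy inside an arbitrary red/blue colouring $\chi$ of $\binom{[N]}{k}$, using $\chi$ itself to play painter and then lifting the $(k-1)$-uniform outcome to a $k$-uniform one. Fix $\alpha\in(0,1/2)$ and set $N=Cs\,\alpha^{-r}(1-\alpha)^{r-m}$ for a sufficiently large absolute constant $C$. Throughout the simulation we maintain a set $L\subseteq[N]$ of \emph{live} vertices, initially $L=[N]$, together with the game vertices $w_1<w_2<\cdots$ chosen so far, all of which lie below $\min L$. To reveal the next game vertex we set $w_{i+1}:=\min L$ and delete it from $L$, so that $L$ always lies strictly above every chosen game vertex. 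When builder queries a $(k-1)$-edge $e=S\cup\{w_{i+1}\}$ with $S\subseteq\{w_1,\dots,w_i\}$, we compute $L^R:=\{z\in L:\chi(e\cup\{z\})=\text{red}\}$ and respond as painter by the rule: if $|L^R|\ge\alpha|L|$ colour $e$ red and replace $L$ with $L^R$; otherwise colour $e$ blue and replace $L$ with $L\setminus L^R$, which then has more than $(1-\alpha)|L|$ elements. This is a legitimate painter, so builder's strategy terminates, having introduced at most $s$ game vertices, coloured at most $r$ edges red and at most $m$ edges in all, with the resulting configuration containing a red $F$ or a blue $K^{k-1}_n$.

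Next I would verify that either outcome lifts to the $k$-uniform colouring. Suppose first builder produces a blue $K^{k-1}_n$ on game vertices $c_1<\cdots<c_n$. For any $k$-subset $\{c_{i_1}<\cdots<c_{i_k}\}$, the $(k-1)$-edge $\{c_{i_1},\dots,c_{i_{k-1}}\}$ has maximum element $c_{i_{k-1}}$, so it was queried when $c_{i_{k-1}}$ was revealed; colouring it blue restricted $L$ to vertices $z$ with $\chi(\{c_{i_1},\dots,c_{i_{k-1}},z\})=\text{blue}$, and since $c_{i_k}$ was drawn later from a subset of that $L$ we get $\chi(\{c_{i_1},\dots,c_{i_k}\})=\text{blue}$. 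Hence $\{c_1,\dots,c_n\}$ spans a blue $K^k_n$. Suppose instead builder produces a red $F$ on game vertices $a_1<\cdots<a_k$, with red edges $\{a_1,\dots,a_{k-1}\}$ and $\{a_1,\dots,a_{k-2},a_k\}$; pick any vertex $z$ from the (still nonempty) final live set, so $z>a_k$. Colouring $\{a_1,\dots,a_{k-1}\}$ red restricted $L$ to red extensions, and both $a_k$ and $z$ were subsequently drawn from subsets of that $L$, so $\chi(\{a_1,\dots,a_{k-1},a_k\})=\chi(\{a_1,\dots,a_{k-1},z\})=\text{red}$; similarly, colouring $\{a_1,\dots,a_{k-2},a_k\}$ red gives $\chi(\{a_1,\dots,a_{k-2},a_k,z\})=\text{red}$. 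Thus on the ordered $(k+1)$-set $a_1<\cdots<a_k<z$ the three $k$-subsets obtained by deleting $z$, deleting $a_k$, and deleting $a_{k-1}$ are all red; the first of these is $\{a_1,\dots,a_k\}$, the $k$-set omitting the largest vertex, so this is a red copy of a member of $\cF_3^k$. Either way we are done.

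It remains to confirm that the live set never becomes empty before the game ends. Revealing the at most $s$ game vertices costs $L$ at most one vertex each, while colouring an edge multiplies $|L|$ by at least $\alpha$ (if red) or by more than $1-\alpha$ (if blue). If the play generated by our painter uses $\rho\le r$ red edges and $\mu\le m$ edges in total, the multiplicative loss is at most $\alpha^{-\rho}(1-\alpha)^{\rho-\mu}=\bigl(\tfrac{1-\alpha}{\alpha}\bigr)^{\rho}(1-\alpha)^{-\mu}$, and since $\alpha<1/2$ both $\tfrac{1-\alpha}{\alpha}$ and $(1-\alpha)^{-1}$ exceed $1$, so this quantity is at most $\alpha^{-r}(1-\alpha)^{r-m}$. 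A short induction on the number of remaining moves—carrying an invariant that $|L|$ is at least a fixed constant times the worst-case remaining budget—then shows that a suitable $C$ keeps $|L|\ge 1$ throughout and still leaves one live vertex to serve as $z$ at the end. This gives $r_k(\cF_3^k,n)\le N=O\bigl(s\,\alpha^{-r}(1-\alpha)^{r-m}\bigr)$.

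The one place that really needs care is this final counting step: the additive losses from newly revealed vertices and the multiplicative losses from coloured edges are interleaved, so rather than simply multiplying the worst cases together one wants to maintain an explicit monovariant as the simulation proceeds in order to extract the clean bound $O(s\,\alpha^{-r}(1-\alpha)^{r-m})$. Everything else is a direct transcription of the Conlon--Fox--Sudakov online-game argument; the only substantive twists are the choice of the red/blue threshold $\alpha$ in the painter's rule and the observation that a red $F$ together with a single additional live vertex already carries a copy of $\cF_3^k$.
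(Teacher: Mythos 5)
Your proposal is correct and follows exactly the route the paper intends: the paper gives no written proof of this theorem, simply calling it a straightforward adaptation of Theorem 2.1 of Conlon--Fox--Sudakov, and your simulation argument (threshold-$\alpha$ painter on a shrinking live set, lifting a blue $(k-1)$-clique to a blue $K_n^k$, and lifting a red $F$ plus one surviving live vertex to a red member of $\cF_3^k$) is precisely that adaptation. The only loosely sketched step, the interleaved additive/multiplicative loss count, is routine: the invariant $|L|\ge N\prod_i f_i-(\text{number of reveals so far})$ survives each operation since all factors are at most $1$, and with $\alpha<1/2$ this gives a final live vertex once $N\ge (s+1)\alpha^{-r}(1-\alpha)^{r-m}$, as you indicate.
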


Setting $\alpha = n^{-2k +4}$ and using Theorem~\ref{builder} we obtain
\begin{equation} \label{final}
r_{k}(\cF_3^k, n)< 2^{O(n^{k-2}\log n)}.\end{equation}

Now we apply (\ref{erupper}) $t-3$ times to obtain
$$r_k(k+1, t; n) < \twr_{t-2} (c \, r_{k-t+3}(k-t+3, 3; n)) \le
\twr_{t-2} (c \, r_{k-t+3}(\cF_3^{k-t+3}, n)).$$
Note that this application of (\ref{erupper}) involves some subtleties, in particular, when applying the standard Erd\H os-Rado pigeonhole argument \cite{ER}, one should observe that a copy of a member of $\cF_{t-1}^{k-1}$ gives rise to a copy of some member of $\cF_{t}^{k}$ due to the existence of the initial edge, and this edge remains to carry out the induction.

Finally we apply (\ref{final}) with $k$ replaced by $k-t+3$ to obtain
$$r_k(k+1, t; n) < \twr_{t-1}(c' n^{k-t+1} \log n)$$
as desired.  We now turn to the proof of Theorem~\ref{builder}.
\bigskip

\emph{Proof of Theorem \ref{builder}.}  The proof is based on the following strategy for builder.  During the game, we will label each exposed vertex with a string of $R$'s and $B$'s, and build a subset $T$ of the exposed vertices as follows.  We start the game by exposing $k-2$ vertices $v_1,\ldots, v_{k-2}$, $v_1 < \cdots < v_{k-2}$, and label each of these vertices by $\emptyset$.  We set $T = \{v_1,\ldots, v_{k-2}\}$.  For every other vertex exposed during this game, builder will draw an edge between that vertex and $\{v_1,\ldots, v_{k - 2}\}$.  Recall painter will immediately color that edge red or blue.  The first exposed vertex that is connected to $\{v_1,\ldots, v_{k-2}\}$ with a red (blue) $(k-1)$-edge will be labelled $R$ ($B$).  The first exposed vertex that is connected to $\{v_1,\ldots, v_{k -2}\}$ with a blue edge, will be added to the set $T$.

Now assume that the vertices $v_1,\ldots, v_i$ have been exposed, each such vertex is labeled with a string of $R$'s and $B$'s, and some $(k-1)$-edges between these vertices have been drawn (and colored).  Moreover, we have our current subset $T\subset \{v_1,\ldots, v_i\}$. The next stage begins by exposing vertex $v_{i + 1}$, and builder will always begin by drawing the $(k-1)$-edge $(v_1,\ldots, v_{k - 2}, v_{i + 1})$.  Depending on whether painter colors this edge red or blue, the first digit in the string assigned to $v_{i + 1}$ will be $R$ or $B$ respectively.  Builder will continue to draw edges of the form $S\cup \{v_{i + 1}\}$, where $S\subset T$ and $|S| = k-2$, one by one by considering each such $S$ in order with respect to~$\prec$.  If at any point painter colors any drawn edge red, we immediately stop and proceed to the next stage, revealing the new vertex $v_{i + 2}$.  If painter paints every such edge blue, then we add $v_{i + 1}$ to $T$, and proceed to the next stage. Note that all $(k-1)$-subsets of $T$ are blue.

Let us make some observations about this game.  Suppose at the end of a stage, builder has not forced a red $F$ or a blue $K^{k-1}_n$.  Then:

1) a vertex is labelled with a string of $B$'s (no $R$'s) if and only if it is in $T$,

2) a vertex is labelled with a sequence of $B$'s followed by a single $R$ if and only if it is not in $T$,

3) no two strings have $R$ in the same position,

4) no two string are the same,

5) no string has more than ${n\choose k-2}$ $B$'s.

1) holds since this is how $T$ is created. 2) holds since we stop the moment we get an $R$.  3) holds since if we obtained two such strings, and since builder's strategy was to consider $(k-2)$-tuples $S\subset T$ in colex order, we would obtain two red $(k-1)$-tuples sharing the same first $k-2$ points $S \subset T$, and this yields a red $F$. Note that we are using the crucial fact that the colex order $\prec$ takes all $(k-2)$-subsets of a set before moving on to a new element.
4) holds since the previous three properties show that this is possible only if two vertices in $T$ have the same label (number of $B$'s).  However this is impossible since the larger vertex considers more edges and must have more $B$'s.  5) holds since otherwise $T$ would induce a blue $K^{k-1}_n$ due to the property of $\prec$ mentioned above.

Therefore, in the vertex on-line Ramsey game, builder has a strategy which ensures a red $F$ or a blue $K^{k-1}_n$ using at most $O(n^{k-2})$ vertices, $O(n^{2k-4})$ edges, and $O(n^{k-2})$ red edges. \qed

\section{The lower bound}

In this section we prove the lower bounds in Theorems \ref{t3} and \ref{main}. We start with Theorem \ref{t3}, the case when $t=3$, which has no dependence on parity. We point out that it is this result which shows that the lower bound of Erd\H os and Hajnal in Theorem 9 of \cite{EH72} is incorrect for $k\geq 4$.  Note that their lower bound corresponds to the upper bound $2^{c_k n \log n}$ in our notation.

\begin{theorem} \label{ehimprove}
For $k \ge 3$ there exists $c_k>0$ such that
$$r_k(k+1, 3; n) >2^{c_k n^{k-2}}.$$
\end{theorem}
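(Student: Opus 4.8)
The plan is to construct a red/blue coloring of $K_N^k$ with $N = 2^{c_k n^{k-2}}$ avoiding both a $(k+1)$-set with $3$ red edges and a blue $K_n^k$. The key structural observation is that a $(k+1)$-set has exactly $k+1$ many $k$-subsets; forbidding $3$ red edges means the red $k$-graph must have the property that every $(k+1)$-set spans at most $2$ red edges. I would build such a coloring by a variant of the stepping-up method, starting from a ground coloring on a smaller uniformity and lifting it, while keeping careful control of which $(k+1)$-sets can accumulate red edges. The vertex set will be identified with binary strings of length $\ell := c_k n^{k-2}$, and the color of a $k$-set $\{x_1 < \cdots < x_k\}$ (ordered by the usual lexicographic/integer order on strings) will be determined by the pattern of the ``split positions'' $\delta(x_i, x_{i+1})$, i.e.\ the most significant bit in which consecutive elements differ, together with a ground coloring consulted at those coordinates.

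First I would set up the stepping-up framework precisely: for $x \neq y$ let $\delta(x,y)$ be the largest coordinate where they differ, record for a $k$-set the sequence $\delta_1, \ldots, \delta_{k-1}$ of consecutive splits, and classify the set according to the local min/max pattern of this sequence (the standard ``$\delta_i$ is a local maximum / local minimum'' dichotomy from Erd\H{o}s--Hajnal--Rado). Second, I would define the red edges to be exactly those $k$-sets whose split sequence has a very restricted shape — essentially monotone or ``almost monotone'' — and argue by a case analysis on the patterns that no $(k+1)$-set contains $3$ such red $k$-subsets; this is where the new ingredients (beyond vanilla stepping up) that the authors reference from \cite{MS1} would enter, since classical stepping up is exactly what has failed to give this. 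Third, for the blue side, I would show that any set of $n$ vertices all of whose $k$-subsets are blue projects down, via the $\delta$-values at the top coordinate, to structures controlled by a lower-uniformity Ramsey-type quantity; iterating $k-2$ levels of this reduction bottoms out at a graph ($2$-uniform) Ramsey bound, which is exponential, and tracking the string length through the $k-2$ levels turns the single exponent into $n^{k-2}$ in the exponent — giving $N = 2^{c_k n^{k-2}}$. The parameter $\ell$ will be chosen so that $2^\ell$ is exactly the number of vertices this construction can support while the projected clique at the bottom stays below the graph Ramsey number for $n$.

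The main obstacle, and the heart of the argument, is the red side: ensuring that the ``at most $2$ red $k$-subsets per $(k+1)$-set'' condition holds globally. A $(k+1)$-set $\{x_0 < \cdots < x_k\}$ has $k$ consecutive splits $\delta_1, \ldots, \delta_k$, and its $k+1$ sub-$k$-sets each delete one $x_j$, which merges two consecutive splits (replacing $\delta_j, \delta_{j+1}$ by their maximum) for interior $j$, or just drops an end split for $j = 0$ or $j = k$. I need to choose the red class so that among these $k+1$ derived split sequences, at most two fall in the ``red shape.'' This forces the red shape to be quite rigid, and simultaneously it must be rich enough that avoiding it on an $n$-set is costly — the tension between these two requirements is exactly why the exponent $k-2$ (rather than something larger) appears, and getting the bookkeeping of local extrema under single-coordinate deletions to cooperate is the delicate combinatorial core. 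Everything else — the recursion on the blue side, the choice of constants, the final substitution — is routine once the red class is pinned down correctly.
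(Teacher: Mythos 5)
There is a genuine gap here, on both sides of the construction. On the blue side, the mechanism you describe does not cohere: a stepping-up lift identifies the vertex set with $\{0,1\}^{\ell}$ and relates blue cliques in the lifted $k$-uniform coloring to a ground $(k-1)$-uniform coloring on the coordinate set $[\ell]$; if you then want to ``iterate the reduction'' down to graph Ramsey, the ground coloring at each level must itself live on an exponentially larger coordinate set, so $k-2$ iterations produce a tower of height $k-1$, not a single exponential with exponent $n^{k-2}$. There is no mechanism in the $\delta$-machinery by which iterating merely multiplies the exponent by $n$ per level, and you give none. The only way a single exponential $2^{c n^{k-2}}$ comes out of a one-step lift is to take the ground problem to be the \emph{polynomial} case $t=2$ at uniformity $k-1$ (where $r_{k-1}(k,2;n)=O(n^{k-2})$ by Erd\H os--Hajnal), which is not what you propose. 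On the red side, the entire burden of the theorem is the condition that no $(k+1)$-set spans three red edges, and you leave the red class unspecified (``essentially monotone or almost monotone''), explicitly deferring the case analysis. This is not a routine detail: the paper's own zigzag/monotone case analysis (Theorem~\ref{stepupeasy}) only succeeds for $t\ge 4$ (with a separate fix for $t=4$, $k$ odd), and precisely because the pattern-based red classes could not be pushed to $t=3$, the authors prove Theorem~\ref{ehimprove} by a completely different argument.

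For contrast, the paper's proof is a short probabilistic construction with no stepping up at all: color the $(k-1)$-subsets of $[N]$ randomly with $k$ colors by a coloring $\phi$, and declare a $k$-set $e$ red iff $\phi(S)=\mathrm{rank}_e(S)$ for every $(k-1)$-subset $S\subset e$. Then any $(k+1)$-set with three red edges $e_i,e_j,e_l$ ($i<j<l$) yields the contradiction $\phi(S)=l-1>i=\phi(S)$ for $S=Y-\{a_i,a_l\}$, so at most two red edges per $(k+1)$-set holds deterministically; and a blue $K_n^k$ forces all $\Theta(n^{k-1})$ members of a partial Steiner system $Sp(k-1,k,n)$ to be blue independently, so a first-moment computation allows $N=2^{c_k n^{k-2}}$. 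Your proposal, as written, neither reproduces this nor supplies a workable substitute for the step that is known to be the hard one.
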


\proof Let $k\geq 3$ and $N=2^{c_kn^{k-2}}$ where $c_k$ is a sufficiently small constant that will be determined later. Color the $(k-1)$-sets of $[N]$ randomly with $k$ colors, where each edge has probability $1/k$ of being a particular color independent of all other edges. Call this coloring $\phi$ and suppose that $\phi(S) \in [k]$ for all $S \in {[N]\choose k-1}$. Now, given a $k$-set $e=a_1<\cdots <a_{k}$ of $[N]$, and a $(k-1)$-subset $S=e-\{a_i\}$ of $e$, let rank$_e(S)=i$. Define the red/blue coloring $\chi$ of ${[N] \choose k}$ by
$$\chi(e)=red \qquad \hbox{ iff } \qquad \phi(S)=\hbox{rank}_e(S) \, \hbox{ for all $S \in {e \choose k-1}$}.$$  The probability that $\chi(e)=red$ is $p_k=k^{-k}$. If an $n$-set $X$ is blue, then all the $k$-subsets of $X$ in a partial Steiner system $F=Sp(k-1, k, n)$  of $X$ are blue, and the colors within $F$ are assigned independently as they depend only on $(k-1)$-subsets
(recall that by definition of $F$, $|A \cap B|\le k-2$ for all $A, B \in F$). It is well known that there exists an $F$ so that $|F|=\Theta(n^{k-1})$ as $n$ grows.  Hence the expected number of blue copies of $K_n^k$ in $[N]$ is at most
$${N \choose n} (1-p_k)^{|F|} < N^n e^{-\Theta(n^{k-1})} < (Ne^{-\Theta(n^{k-2})})^n <1$$
due to the choice of $c_k$. So there exists a coloring $\chi$ with no blue $K_n^k$. Next, suppose for contradiction that $Y$ is a $(k+1)$-subset of $[N]$ that contains three red edges under $\chi$. Say that $Y=a_1<\cdots <a_{k+1}$. Call these red edges $e_i, e_j, e_l$, where $e_q=Y-\{a_q\}$.  Assume that $i<j<l$ so that $l-1>i$ and let $S=Y-\{a_i, a_l\}$. Then
$$\phi(S)=\hbox{rank}_{e_i}(S)=l-1 >i= \hbox{rank}_{e_l}(S)=\phi(S).$$
This contradiction shows that we have at most two red edges in $Y$.   \qed

For larger values of $t$, we establish the lower bound in Theorem~\ref{main} using ideas that originated in the Erd\H os-Hajnal stepping up lemma (see \cite{GRS}) and were further developed recently by the authors in~\cite{MS1}.
As mentioned in the introduction, this is the main new advance in this work.

 It is convenient to use the following notation. Let $\cH_{t}:= \cH^k_{t}$  be the family of $k$-graphs with $k+1$ vertices and $t$ edges, and define
$r_k(\cH_t^k, n)=r_k(k+1, t;n)$.  We will omit the superscript if it is obvious from the context.  In what follows, by a red $\cH_t$ we mean a red copy of some member $H \in \cH_t$.
\bigskip

\begin{theorem}\label{stepupeasy}
Let $k\geq 6$ and $t \geq 4$.  If we are not in the case when $t = 4$ and $k$ is odd, then we have $r_k(\cH_t, 2kn) > 2^{r_{k-1}(\cH_{t-1}, n)-1}.$

\end{theorem}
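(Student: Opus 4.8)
The plan is to use a stepping-up construction. We are given a red/blue coloring $\phi$ of $\binom{[m]}{k-1}$ with $m = r_{k-1}(\cH_{t-1},n)-1$ that contains no red $\cH_{t-1}$ and no blue $K^{k-1}_n$; from it we will build a red/blue coloring $\chi$ of $\binom{[N]}{k}$ on $N = 2^{m}$ vertices that contains no red $\cH_t$ and no blue $K^k_{2kn}$, which gives exactly the stated bound $r_k(\cH_t,2kn) > 2^{m}$. Identify $[N]$ with $\{0,1\}^m$, and for distinct $a,b \in [N]$ let $\delta(a,b)$ be the largest coordinate on which they differ. Given $a_1 < a_2 < \cdots < a_k$ (in the natural order on $[N]$, viewed as binary integers), the quantities $\delta_i = \delta(a_i,a_{i+1})$ for $1 \le i \le k-1$ satisfy the familiar stepping-up property: they are all distinct, and $\delta(a_i,a_j) = \max\{\delta_i,\ldots,\delta_{j-1}\}$ for $i<j$; in particular the sequence $\delta_1,\ldots,\delta_{k-1}$ has a unique "peak" pattern, with a single local maximum reachable from both ends.

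**Defining the coloring $\chi$.** For a $k$-set $e = \{a_1 < \cdots < a_k\}$, consider the tuple $(\delta_1,\ldots,\delta_{k-1})$. First I would reserve a set of "bad" $k$-sets to be colored blue: those $e$ for which the sequence $(\delta_1,\ldots,\delta_{k-1})$ is not \emph{monotone} (i.e. the peak is interior), say. For the remaining $k$-sets, where WLOG $\delta_1 < \delta_2 < \cdots < \delta_{k-1}$ or the reverse, the set $\{\delta_1,\ldots,\delta_{k-1}\}$ is a $(k-1)$-subset of $[m]$, and we color $\chi(e) = \phi(\{\delta_1,\ldots,\delta_{k-1}\})$. (The precise split between which non-monotone patterns get colored blue versus handled by $\phi$ is where the parity hypothesis "$t=4$ and $k$ odd is excluded" and the $k \ge 6$ bound will enter — I would first try the cleanest version and then adjust; see below.)

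**No blue clique.** Suppose $X \subseteq [N]$ with $|X| = 2kn$ is entirely blue under $\chi$. The standard argument: split $X$ according to the top bit, recurse, and after $\log$-ing one extracts a subset $Y$ of size about $|X|/\text{const}$ on which all $k$-tuples are "monotone-type" and on which the map $e \mapsto \{\delta_1,\ldots,\delta_{k-1}\}$ realizes, on $(k-1)$-subsets of an appropriately chosen $(k-1)$-element-indexed family, the coloring $\phi$ restricted to a large set $Z \subseteq [m]$ — more precisely, one finds $Z \subseteq [m]$ with $|Z| \ge n$ such that every $(k-1)$-subset of $Z$ arises as $\{\delta_1,\ldots,\delta_{k-1}\}$ of some monotone $k$-tuple inside $Y$, and all those are blue, forcing $\phi$ to have a blue $K^{k-1}_n$ on $Z$ — contradiction. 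The loss of the factor $2k$ (rather than more) comes from being careful here; this is routine stepping-up bookkeeping once the construction is fixed.

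**No red $\cH_t$ — the main point.** Suppose $W = \{a_1 < \cdots < a_{k+1}\} \subseteq [N]$ spans $t$ red edges. Set $\delta_i = \delta(a_i,a_{i+1})$ for $1 \le i \le k$; these $k$ values are distinct. For each of the $k+1$ $k$-subsets $e_j = W \setminus \{a_j\}$, its own difference-sequence is obtained from $(\delta_1,\ldots,\delta_k)$ by deleting one entry and (when $j$ is interior) merging the two neighbors into their max. The key structural fact to prove is: among the $k+1$ sets $e_j$, only those for which the resulting $(k-1)$-sequence is monotone can be red, and the collection of $(k-1)$-sets $\{\delta(e_j)\}$ realized by these "eligible" $e_j$ forms a copy of some member of $\cH_{t-1}$ on the vertex set $\{\delta_1,\ldots,\delta_k\} \subseteq [m]$ (a $k$-vertex, $t'$-edge configuration with $t' \ge t-1$), each of them colored under $\phi$ by $\phi(\delta(e_j))$; since all eligible red $e_j$ are red under $\chi$, this produces a red $\cH_{t-1}$ under $\phi$ — contradiction. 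Establishing this reduction is the heart of the proof: one must check (i) at least $t - 1$ of the $e_j$ are eligible given $t$ are red, which is where we can afford to "sacrifice" at most one edge (the one corresponding to the interior peak), and (ii) the eligible ones really do correspond to $k$-element subsets that form a valid $\cH_{t-1}$ member — here the edge $e_j$ with $j = 1$ and $j = k+1$ (the two end-deletions) always stay monotone, guaranteeing enough edges survive, and one tracks exactly which pattern of $(\delta_1,\ldots,\delta_k)$ forces the loss.

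**Where parity bites.** The exclusion of "$t = 4$, $k$ odd" and the hypothesis $k \ge 6$ are precisely what is needed to guarantee that the worst-case peak pattern still leaves $\ge t-1$ eligible red edges \emph{and} that the induced configuration on $\{\delta_1,\ldots,\delta_k\}$ genuinely lies in $\cH_{t-1}$ rather than degenerating; when $t=4$ and $k$ is odd the symmetric peak position can kill two of the four red edges simultaneously, dropping us to $2 < t-1 = 3$ and breaking the induction. So the main obstacle I anticipate is a careful case analysis of the peak position of $(\delta_1,\ldots,\delta_k)$ relative to deletions, showing the count never drops below $t-1$ outside the excluded case; the cardinality estimate for the blue clique and the verification that $\chi$ is well-defined are comparatively mechanical. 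I would organize the write-up as: (1) setup and definition of $\chi$; (2) the no-blue-$K^k_{2kn}$ lemma; (3) the no-red-$\cH_t$ lemma with the peak case analysis; (4) assembling the bound.
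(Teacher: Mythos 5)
There is a genuine gap, and it sits exactly at the point you dismiss as ``routine stepping-up bookkeeping once the construction is fixed'': you never fix the construction, and the one concrete version you do propose (colour $e$ by $\phi(\delta_1,\ldots,\delta_{k-1})$ when the $\delta$-sequence is monotone, and blue for \emph{every} non-monotone pattern) cannot yield the bound with only a factor $2k$ loss. If all non-monotone patterns are blue, then a blue $\chi$-clique $X$ only gives information about $\phi$ on those $(k-1)$-sets that arise as block-maxima of \emph{monotone} $k$-subsets of $X$, and there is no routine way to force all $(k-1)$-subsets of some $n$-element set of indices to appear this way; the ``split by the top bit and recurse'' sketch is from a different (and much lossier) argument. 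The paper's proof resolves precisely this difficulty by enlarging the red class: an edge is also red when its $\delta$-sequence is a \emph{zigzag} ($\delta_1>\delta_2<\delta_3>\delta_4<\cdots$). Then a blue $2kn$-clique has no monotone run of length $n$ in its $\delta$-sequence (Property C), hence has $k$ consecutive local extrema, and the pairs of vertices straddling alternate extrema form a zigzag, i.e.\ a red edge --- contradiction. Without some such red non-monotone class the ``no blue $K^k_{2kn}$'' step fails, and with it the real work becomes showing that zigzag red edges do not create a red $\cH_t$, which is the bulk of the paper's proof (a claim that a monotone red edge and a zigzag red edge cannot share $k-1$ vertices, followed by a case analysis over all-zigzag configurations).

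Relatedly, your account of where the hypotheses enter is off. In the actual argument, when all $t$ red edges on $k+1$ vertices are monotone you always get $t-1$ red edges of $\phi$ forming a member of $\cH_{t-1}^{k-1}$ (the only loss is that the two top deletions give the same $(k-1)$-set); no parity issue arises there, and it is not true that the end-deletions ``always stay monotone'' independently of the pattern. The conditions $k\geq 6$, $t\geq 4$, and the exclusion of ($t=4$, $k$ odd) are needed in the analysis of \emph{zigzag} red edges: for $t\geq 5$ two zigzag deletions must differ in at least four positions and one derives a contradiction via Property B, while for $t=4$ the four deleted vertices can be consecutive and the subcase near the top end of the sequence (deleting $a_{k-2}$ versus $a_{k+1}$) is exactly where the parity of $k$ is used; for odd $k$ the paper must change the coloring to a ``strong zigzag'' and accepts the weaker bound $r_k(\cH_4,4n^2)>2^{r_{k-1}(\cH_3,n)-1}$ in a separate theorem. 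So the missing idea is the extra red (zigzag) class together with its no-red-$\cH_t$ verification; as written, your construction does not support either half of the claimed inequality.
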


\begin{proof} Set $N=r_{k-1}(\cH_{t-1}, n) -1$, and let $\phi$ be a red/blue coloring of the edges of $K_N^{k-1}$ with no red $\cH_{t-1}^{k-1}$ and no blue $K_{n}^{k-1}$.  Given $\phi$, we will produce a red/blue coloring $\chi$ on the edges of $K_{2^N}^k$ with no red $\cH_t^k$ and no blue $K_{2kn}^k$. Let $V(K_N^{k-1})= [N]$ and $V(K_{2^N}^k)= \{0,1\}^{N}$.

The vertices of $V(K_{2^N}^k)$ are naturally ordered by the integer they represent in binary, so for $a,b \in V(K_{2^N}^k)$ where $a = (a(1),\ldots, a(N))$ and $b = (b(1),\ldots, b(N))$, $a < b$ iff  there is an $i$ such that $a(i)  = 0$, $b(i) = 1$, and $a(j)=b(j)$ for all $1 \le j <i$. In other words,  $i$ is the first position (minimum index) in which $a$ and $b$ differ.  For $a,b\in V(K_{2^N}^k)$, where $a\neq b$, let $\delta(a,b)$ denote the least $i$ for which $a(i)\neq b(i)$.\footnote{For $a = (1,0,1,1,0)$ and $b = (1,0,0,1,1)$, we have $a > b$ and $\delta(a,b) = 3$.  Let us remark that we have slightly modified the definition of $\delta$ in \cite{GRS}, using \emph{least} $i$ rather than \emph{largest} $i$ for which $a(i)\neq b(i)$.}  Notice we have the following stepping-up properties (see \cite{GRS}).

\begin{description}

\item[Property A:] For every triple $a < b < c$, $\delta(a,b) \not = \delta(b,c)$.

\item[Property B:] For $a_1 < \cdots < a_r$, $\delta(a_1,a_{r}) = \min_{1 \leq j \leq r-1}\delta(a_j,a_{j + 1})$.

\end{description}

Before we define the coloring $\chi$, let us introduce several definitions.  Set $V =\{0,1\}^N$.  Given any $m$-set $S = \{a_1,\ldots, a_m\}\subset V$, where $a_1<a_2<\cdots<a_{m}$, consider the integers $\delta_i=\delta(a_i,a_{i+1}), 1\le i\le m-1$. We say that $\delta_i$ is a {\it local minimum} if $\delta_{i-1}>\delta_i<\delta_{i+1}$, a {\it local maximum} if $\delta_{i-1}<\delta_i>\delta_{i+1}$, and a {\it local extremum} if it is either a local minimum or a local maximum.  Since $\delta_{i-1} \not = \delta_i$ for every $i$, every nonmonotone sequence $\{\delta_i\}_{i = 1}^{m-1}$ has a local extremum. For convenience, we write $\delta(S) = \{\delta_i\}_{i = 1}^{m}$.

We now define the coloring $\chi$ on the $k$-tuples of $V=\{0,1\}^N$ as follows.  Given an edge $e= (a_1,\ldots, a_k)$ in $V=V(K_{2^N}^k)$, where $a_1< \cdots < a_k$, let $\delta_i = \delta(a_i,a_{i+1})$.  Then $\chi(e)=red$ if

$\bullet$ the sequence $\delta(e)$ is monotone and $\phi(\delta_1, \ldots, \delta_{k-1}) =red$, or

$\bullet$ the sequence $\delta(e)$ is a \emph{zigzag}, meaning $\delta_2,\delta_4,\ldots$ are local minimums and $\delta_3,\delta_5,\ldots$ are local maximums. In other words,  $\delta_1 > \delta_2 < \delta_3 > \delta_4 < \cdots$.

Otherwise $\chi(e)=blue$.

 Note that the definition of zigzag  requires $\delta_1 > \delta_2 < \delta_3 > \delta_4 < \cdots$ and if the inequalities are in the opposite directions, i.e.  $\delta_1 <\delta_2 >\delta_3 < \cdots$, then $e$ is not zigzag.

The following property can easily be verified using Properties A and B (see \cite{GRS}).

\begin{description}

\item[Property C:] For $a_1 < \cdots < a_r$, set $\delta_j = \delta(a_j,a_{j + 1})$ and suppose that $\delta_1,\ldots, \delta_{r-1}$ forms a monotone sequence.  If $\chi$ colors every $k$-tuple in $\{a_1,\ldots, a_r\}$ red (blue), then $\phi$ colors every $(k-1)$-tuple in $\{\delta_1,\ldots, \delta_{r-1}\}$ red (blue).

\end{description}

Set $m = 2kn$.  For sake of contradiction, suppose there is an $m$-set $S = \{a_1,\ldots, a_m\} \subset V$ such that $\chi$ colors every $k$-tuple in $S$ blue.  Let $\delta_i = \delta(a_i,a_{i+1})$ for $1\leq i \leq m-1$.  By Property C, there is no integer $j$ such that the sequence $\{\delta_i\}_{i = j}^{j + n - 1}$ is monotone, since otherwise $\phi$ colors every triple of the $n$-set $\{\delta_j,\delta_{j+1},\ldots, \delta_{j + n-1}\}$ blue which is a contradiction.  Therefore, we can assume there are $k$ consecutive extremums $\delta_{i_1},\delta_{i_2},\cdots, \delta_{i_{k}}$, such that $\delta_{i_1},\delta_{i_3},\ldots$ are local maximums and  $\delta_{i_2},\delta_{i_4},\ldots$ are local minimums.  Recall that $\delta_{i_j} = \delta(a_{i_j},a_{i_j + 1})$. For $k$ even, consider the $k$ vertices
$$e = (a_{i_1}, a_{i_1 + 1},a_{i_3},a_{i_3 + 1}, a_{i_5}, a_{i_5 + 1}, \ldots , a_{i_{k-1}}, a_{i_{k-1}  +1}).$$
By Property B, we have
$$\delta(a_{i_1},a_{i_1 + 1}) > \delta(a_{i_1 + 1},a_{i_3}) < \delta(a_{i_3},a_{i_3 + 1}) > \delta(a_{i_3+ 1}, a_{i_5}) < \cdots.$$
Hence $\delta(e)$ is zigzag and $\chi(e) = red$, contradiction.   For $k$ odd, consider the $k$ vertices
$$e = (a_{i_1}, a_{i_1 + 1},a_{i_3},a_{i_3 + 1}, a_{i_5}, a_{i_5 + 1}, \ldots , a_{i_{k-2}}, a_{i_{k-2}  +1}, a_{i_{k-2} + 2}).$$
Again by Property B, we have
$$\delta(a_{i_1},a_{i_1 + 1}) > \delta(a_{i_1 + 1},a_{i_3}) < \delta(a_{i_3},a_{i_3 + 1}) > \delta(a_{i_3+ 1}, a_{i_5}) < \cdots,$$
which implies $\delta(e)$ is zigzag and $\chi(e) = red$, contradiction.

Now it suffices to show that there is no red copy of an $H \in \cH_{t}^k$ under $\chi$.  We first establish the following claim.  We use the notation $X+x=X \cup \{x\}$.

{\bf Claim 1.} For $k\geq 6$, let $e,e' \in E(K^k_{2^N})$ such that $\delta(e)$ is monotone, $\delta(e')$ is zigzag, and $\chi(e)=\chi(e')=red$.  Then we have $|e \cap e'|<k-1$.

\proof  Suppose for contradiction that $e= (a_1,\ldots, a_k)$, where $a_1< \cdots < a_k$, and $e'=e-a_i+a'$.
Now the sequence $\delta(e-a_i)$ is monotone of length $k-1$.  Relabel $e-a_i = (b_1,\ldots, b_{k-1})$, where $b_1<\cdots < b_{k-1}$ and let us insert $a'$ into $e-a_i$.  Assume first that sequence $\delta(e-a_i)$ is increasing. If $a'>b_3$, then $\delta(b_1,b_2)< \delta(b_2,b_3)$ shows that $\delta(e')$ is not zigzag (starts in the wrong direction). If $a'<b_2$, then $\delta(b_2,b_3)< \delta(b_3,b_4) < \delta(b_4,b_5)$ shows that $\delta(e')$ is not zigzag.  So
we have $b_2< a'< b_3$.  However by Property B, $\delta(b_2,a') \geq \delta(b_2,b_3)$. So we have $$\delta(b_2,a') \geq \delta(b_2,b_3) > \delta(b_1,b_2),$$ which shows that $\delta(e')$ is not zigzag

Now suppose $\delta(e-a_i)$ is decreasing.  For $\delta(e')$ to be zigzag, we must have $b_2 < a' < b_4$.  If $b_2 < a' < b_3$, then by Property B we have $\delta(a',b_3)> \delta(b_3,b_4) > \delta(b_4,b_5)$ which is a contradiction.  Now if $b_3 < a' < b_4$, we must have $\delta(b_2,b_3) < \delta(b_3,a')>\delta(a', b_4)<\delta(b_4, b_5)$.  However, Property B and the fact that
$\delta(e-a_i)$ is decreasing imply that
 $\delta(a',b_4) \ge \delta(b_3,b_4) > \delta(b_4,b_5)$ which is a contradiction. \qed

For sake of contradiction, suppose $\chi$ produces a red $H \in \cH_t$.  By Claim 1, we may assume that for the $t$ red edges $e_1,\ldots, e_t \in E(H)$, either all of the sequences $\delta(e_1),\ldots, \delta(e_t)$ are monotone or all of them are zigzag.   Let $V(H) = a = \{a_1,\ldots, a_{k + 1}\}$ with $a_1<\cdots < a_{k+1}$ and $\delta_i = \delta(a_i,a_{i + 1})$.

{\bf Case 1.} All $t$ sequences are monotone.
Suppose they are all increasing (clearly one cannot be increasing and another decreasing).  Then one can easily see that $\delta(a)$ is increasing.  By Property B, for $i \leq k$, we have $\delta(a - a_i) = \delta(a) - \delta_i$ and $\delta(a-a_k) = \delta(a- a_{k + 1})$.  Hence these $t$ red edges give rise to at least $t-1$ red edges in $K_N^{k-1}$, which is a contradiction.  If all $t$ sequences are decreasing, then a similar argument follows.

 \textbf{Case 2.}  All $t$ sequences are zigzag, and $t\geq 5$.  Since $t \ge 5$, we must have two red edges $e_1 = a-a_i$ and $e_2 = a - a_j$ where $|i-j| \geq 4$.  Several times we will use the following

 {\bf Fact:} $\delta_i \ne \delta_{i+2}$ as long as $\delta_{i+1}>\delta_i$.

\emph{Case 2.1}.  If $i=1$ then we have $j\ge 5$.  Since $\delta(e_2)$ is zigzag, we have
$\delta_1>\delta_2<\delta_3$, but this contradicts the fact that $\delta(e_1)$ is zigzag as $\delta_3$ is a local maximum in the sequence.

\emph{Case 2.2}.  If $i = 2$, then we have $j \geq 6$.  Since $\delta(e_2)$ is zigzag, we have
$\delta_1>\delta_2<\delta_3>\delta_4$.  By Property B, $\delta(e_1)$ is not zigzag since $\delta_3$ is a local maximum in the sequence.

\emph{Case 2.3}. If $i\geq 3$, then we can also conclude that $i \leq k-3$ since $t\geq 5$.  Suppose $\delta_i$ is a local minimum in the sequence $\delta(e_2)$.  Since $\delta(e_2)$ is zigzag, we have $\delta_{i-2} <  \delta_{i-1} > \delta_{i } < \delta_{i + 1}$.  Moreover, we can conclude that $i-2 \geq 2$ since otherwise $\delta(e_2)$ is not zigzag (wrong direction). Hence $\delta_{i-3} > \delta_{i-2} <  \delta_{i-1} > \delta_{i } < \delta_{i + 1}$. By Property B, we have $\delta(a_{i-1},a_{i + 1}) = \delta_i$. By the Fact, $\delta_i \ne \delta_{i-2}$.  If $\delta_i  < \delta_{i-2}$, then $\delta(e_1)$ is not zigzag as $\delta_{i-3} > \delta_{i-2} > \delta(a_{i-1},a_{i + 1})$.   If $\delta_i > \delta_{i -2}$, then again $\delta(e_1)$ is not zigzag as $\delta_{i - 2} < \delta(a_{i - 1},a_{i+1}) < \delta_{i + 1}$, contradiction.

Now suppose $\delta_i$ is a local maximum in the sequence $\delta(e_2)$.  Then we have $\delta_{i-2} > \delta_{i-1} < \delta_i > \delta_{i + 1} < \delta_{i + 2}$. By Property B, we have $\delta(a_{i - 1},a_{i + 1}) = \delta_{i - 1}$.  By the Fact, $\delta_{i-1} \ne \delta_{i+1}$. If $\delta_{i -1} > \delta_{i + 1}$, then $\delta(e_1)$ is not zigzag as $\delta_{i-2} > \delta(a_{i-1},a_{i + 1}) > \delta_{i + 1}$.  If $\delta_{i -1} < \delta_{i + 1}$, then again $\delta(e_1)$ is not zigzag as $\delta(a_{i-1},a_{i + 1}) < \delta_{i + 1} < \delta_{i + 2}$, contradiction.

{\bf Case 3.} Suppose $t = 4$, $k$ is even, and all four sequences are zigzag.  Let $e_j = a - a_{i_j}$ be the four red edges on the vertex set $a = \{a_1,\ldots, a_{k + 1}\}$, such that $i_1 < i_2 < i_3 < i_4$.    We copy the argument in Case 2 verbatim unless $i_4 - i_1 = 3$, and therefore we can assume the four red edges are of the form $e_1 = a - a_i, e_2 = a - a_{i + 1}, e_3 = a - a_{i + 2}, e_4 = a - a_{i + 3}$.

\emph{Case 3.1}.  Suppose $i =1$.  Since $\delta(e_4)$ is zigzag, we have $\delta_1 > \delta_2 < \delta(a_3,a_5)$.  By Property B, $\delta_3 \ge \delta(a_3, a_5) > \delta_2$, which implies $\delta(e_1)$ cannot be zigzag (wrong direction).

\emph{Case 3.2.}  Suppose $i = 2$. Since $\delta(e_4)$ is zigzag, we have $\delta_1 > \delta_2 < \delta_3$.  By Property B, we have $ \delta(a_1,a_3)=\delta_2  < \delta_3$ which implies $\delta(e_1)$ is not zigzag (wrong direction), contradiction.

\emph{Case 3.3}.  Suppose $i = k-2$.  This is the only part of the proof that requires $k$ to be even. Since $k$ is even and $\delta(e_1)$ is zigzag, we have $\delta_{k-4} < \delta(a_{k-3},a_{k-1}) > \delta_{k-1} < \delta_k$.  By Property~B,
$\delta_{k-2} \ge \delta(a_{k-3}, a_{k-1})> \delta_{k-1}$, and since $k$ is even, this implies $\delta(e_4)$ is not zigzag (wrong direction).

\emph{Case 3.4}.  Suppose $3 \leq i \leq k - 3$.  Then $\delta_i$ is an extremum in the sequence $\delta(e_4)$.  Suppose it is a local minimum, which implies $i \geq 4$.  Then we have $\delta_{i-3}> \delta_{i -2}< \delta_{i - 1} > \delta_i < \delta_{i + 1}$.  By Property~B we have $\delta(a_{i-1},a_{i + 1}) = \delta_i$.  By the Fact, $\delta_i \ne \delta_{i-2}$. If $\delta_i > \delta_{i-2}$, then we have $\delta_{i-2} < \delta(a_{i - 1},a_{i + 1}) < \delta_{i + 1}$, and hence $\delta(e_1)$ is not zigzag.  If $\delta_i < \delta_{i-2}$, then we have $\delta_{i-3} > \delta_{i-2} >  \delta(a_{i - 1},a_{i + 1})$, and hence $\delta(e_1)$ is not zigzag, contradiction.

Now suppose that $\delta_i$ is a local maximum in the sequence $\delta(e_4)$.  Then we have $\delta_{i-2} > \delta_{i - 1} < \delta_i > \delta_{i +1} < \delta(a_{i + 2},a_{i + 4})$.  By Property B, we have $\delta_{i + 2} \geq \delta(a_{i + 2},a_{i + 4})$ and $\delta(a_{i-1},a_{i + 1}) = \delta_{i -1}$.   By the Fact, $\delta_{i-1} \ne \delta_{i+1}$. If $\delta_{i-1} < \delta_{i + 1}$, then we have $\delta(a_{i - 1},a_{i + 1}) < \delta_{i + 1} < \delta_{i + 2}$, which implies $\delta(e_1)$ is not zigzag.  If $\delta_{i-1} > \delta_{i + 1}$, then we have $\delta_{i-2} > \delta(a_{i -1},a_{i +1}) > \delta_{i + 1}$, which implies $\delta(e_1)$ is not zigzag, contradiction.    \end{proof}

We now establish the stepping-up lemma for the special case $k$ is odd and $t = 4$.

\bigskip

\begin{theorem}\label{3to4odd}For odd $k >6$, we have $r_k(\cH_4, 4n^2) > 2^{r_{k-1}(\cH_{3}, n)-1}.$

\end{theorem}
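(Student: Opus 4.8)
The plan is to mimic the structure of the proof of Theorem~\ref{stepupeasy}, but to replace the single-step binary encoding with a \emph{two-step} encoding so that the squared loss $4n^2$ on the left replaces the linear loss $2kn$. Concretely, set $N = r_{k-1}(\cH_3,n)-1$ and let $\phi$ be a red/blue coloring of $K_N^{k-1}$ with no red $\cH_3^{k-1}$ and no blue $K_n^{k-1}$. We build $\chi$ on $\{0,1\}^N$ exactly as before: a $k$-set $e$ with gap sequence $\delta(e)=(\delta_1,\dots,\delta_{k-1})$ is red iff $\delta(e)$ is monotone with $\phi(\delta_1,\dots,\delta_{k-1})=red$, or $\delta(e)$ is a zigzag $\delta_1>\delta_2<\delta_3>\cdots$. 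The no-blue-clique argument carries over verbatim (using Property~C and the observation that a long blue clique forces either a long monotone run, hence a blue $K_n^{k-1}$ under $\phi$, or $k$ consecutive extrema, hence a red zigzag $k$-set), so the only real work is ruling out a red $\cH_4^k$ when $k$ is odd.

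First I would reprove Claim~1 in this setting: two red $k$-edges $e,e'$ on a common vertex set of size $k+1$, one monotone and one zigzag, cannot share $k-1$ vertices. The argument in the excerpt for Claim~1 only uses $k\ge 6$ and Property~B, so it applies here unchanged. Hence among the four red edges $e_1,\dots,e_4$ of a putative red $H\in\cH_4$ on $a=\{a_1,\dots,a_{k+1}\}$, all four gap sequences are monotone or all four are zigzag. The monotone case is Case~1 of Theorem~\ref{stepupeasy} again: if all are increasing then $\delta(a)$ is increasing, deleting $a_i$ for $i\le k$ gives $\delta(a-a_i)=\delta(a)-\delta_i$ and $\delta(a-a_k)=\delta(a-a_{k+1})$, so the four red $k$-edges yield at least three red $(k-1)$-edges among $\{\delta_1,\dots,\delta_N\}$ — a red member of $\cH_3^{k-1}$ under $\phi$, contradiction (decreasing is symmetric).

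So the heart of the matter is: all four of $e_1=a-a_{i_1},\dots,e_4=a-a_{i_4}$ (with $i_1<i_2<i_3<i_4$) are zigzag, $k$ odd, and one must derive a contradiction. When $i_4-i_1\ge 4$ the verbatim Case~2 argument of Theorem~\ref{stepupeasy} applies (it never used parity), so I may assume $i_4-i_1=3$, i.e.\ the deleted indices are $i,i+1,i+2,i+3$. I would then follow Case~3 of Theorem~\ref{stepupeasy} — subcases $i=1$, $i=2$, and the generic $3\le i\le k-3$ (local min / local max of $\delta_i$ in $\delta(e_4)$, using Property~B to compute $\delta(a_{i-1},a_{i+1})$ and the ``Fact'' $\delta_i\ne\delta_{i+2}$ whenever $\delta_{i+1}>\delta_i$) — all of which go through with no parity assumption. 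The one place parity entered was Case~3.3, $i=k-2$: there one compares the two endpoints of the merged zigzag across the whole sequence, and whether the forced inequality at position $k-1$ contradicts zigzagness of $\delta(e_4)$ depends on the parity of $k$. So the main obstacle, and the only genuinely new step, is handling $i=k-2$ when $k$ is odd.

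The resolution I would pursue exploits that we now have \emph{two} consecutive deleted vertices at the right end, namely $a_{i+2}=a_k$ and $a_{i+3}=a_{k+1}$, together with the last gap $\delta_{k-1}$ being shared by several of the $e_j$. For $k$ odd, I would look at $e_1=a-a_{k-2}$: its gap sequence is $(\delta_1,\dots,\delta_{k-4},\delta(a_{k-3},a_{k-1}),\delta_{k-1})$, and by Property~B, $\delta(a_{k-3},a_{k-1})=\min(\delta_{k-3},\delta_{k-2})$. Since $k$ is odd, zigzagness of $\delta(e_4)=(\delta_1,\dots,\delta_{k-1})$ forces a specific direction of the inequality $\delta_{k-4}$ vs $\delta_{k-3}$ and of $\delta_{k-2}$ vs $\delta_{k-1}$; combining these with Property~B one of $e_1$ or a second edge among $e_2,e_3$ fails the zigzag alternation at the newly merged position — exactly as in Case~3.3 but with the roles of the "short" and "long" sides swapped because of the parity flip. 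In other words, I expect that for odd $k$ it is $\delta(e_1)$ (rather than $\delta(e_4)$) whose merged position $k-3$ lands in the wrong direction, and the same Property~B comparison $\min(\delta_{k-3},\delta_{k-2})$ versus $\delta_{k-4}$ does the job once the case split on $\delta_{k-3}\lessgtr\delta_{k-1}$ (via the Fact) is made. If that symmetric swap is not quite enough, the fallback is to additionally use edge $e_2=a-a_{k-1}$, whose deletion merges $a_{k-2}$ and $a_k$: since $a_{k-2},a_{k-1},a_k,a_{k+1}$ are four nearly-consecutive vertices with interlaced gaps, at least one of the three merges $e_1,e_2,e_3$ must create a monotone length-3 subpattern, killing the zigzag. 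I'd expect $k>6$ (so $k\ge7$) to give enough room — at least $k-4\ge3$ gaps to the left of the merge — for these comparisons, just as $k\ge6$ sufficed in Theorem~\ref{stepupeasy}.
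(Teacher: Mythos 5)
Your proposal keeps the coloring of Theorem~\ref{stepupeasy} completely unchanged and hopes that Case~3.3 ($i=k-2$, $k$ odd) can be repaired by a more clever case analysis using $e_1$ or $e_2$. This cannot work, because with the ordinary zigzag coloring and $k$ odd a red $\cH_4$ genuinely exists; the statement you are trying to prove in that final step is false. Concretely, take $k=7$ and eight vertices $a_1<\cdots<a_8$ realizing the gap pattern $(\delta_1,\ldots,\delta_7)=(5,1,6,2,7,4,3)$ (any sequence of distinct values is realizable by binary strings). Then $\delta(a-a_8)=(5,1,6,2,7,4)$, $\delta(a-a_7)=(5,1,6,2,7,\min(4,3))=(5,1,6,2,7,3)$, $\delta(a-a_6)=(5,1,6,2,\min(7,4),3)=(5,1,6,2,4,3)$ and $\delta(a-a_5)=(5,1,6,\min(2,7),4,3)=(5,1,6,2,4,3)$ are all zigzag, so all four edges $a-a_5,a-a_6,a-a_7,a-a_8$ are red and form a red member of $\cH_4$. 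So no rearrangement of the Property~B comparisons (your ``symmetric swap'' or the fallback via $e_2$) can close Case~3.3; the obstruction is in the coloring itself, not in the bookkeeping.

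The paper's proof overcomes exactly this by changing the coloring: a zigzag edge is declared red only if in addition $\delta_{k-1}<\delta_{k-3}$ (a ``strong zigzag''). That extra condition is what makes the $i=k-2$ case contradictory for odd $k$ (Property~B forces $\delta(a_{k-3},a_{k-1})=\delta_{k-3}<\delta_{k-1}$, violating strong-zigzagness of $\delta(e_4)$). But the price is that your claim that ``the no-blue-clique argument carries over verbatim'' also fails for the modified coloring: $k$ consecutive extrema no longer automatically give a red edge, since the final comparison $\delta_{k-1}<\delta_{k-3}$ may fail. The paper instead takes $2n$ consecutive extrema and iterates: either some window yields a strong zigzag (red, contradiction), or the local minima increase, $\delta_{i_2}<\delta_{i_4}<\cdots<\delta_{i_{2n}}$, and then Properties~B and C turn this monotone $\delta$-sequence of length $n$ into a blue $K_n^{k-1}$ under $\phi$, a contradiction. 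This is precisely why the theorem reads $4n^2$ rather than $2kn$ — a discrepancy your write-up should have flagged, since if the old argument really carried over verbatim you would be proving the stronger bound and the odd case of Theorem~\ref{main} would not lose the factor $2$ in the exponent. In short: the key missing idea is the modification of the red condition (strong zigzag) together with the new, quadratic-loss blue-clique argument; without it the proof cannot be completed.
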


\begin{proof}  The proof is nearly identical to the previous proof  though there is one crucial difference in the definition of a red edge.  Again we set $N=r_{k-1}(\cH_{3}, n) -1$, and let $\phi$ be a red/blue coloring of the edges of $K_N^{k-1}$ with no red $\cH_{t-1}^{k-1}$ and no blue $K_{n}^{k-1}$.  Given $\phi$, we will produce a red/blue coloring $\chi$ on the edges of $K_{2^N}^k$ with no red $\cH_t^k$ and no blue $K_{4n^2}^k$. Let $V(K_N^{k-1})=[N]$ and $V(K_{2^N}^k)= \{0,1\}^{N}$, and order the elements of $V(K_{2^N}^k)$ by the integer they represent in binary.  We define $\chi$ slightly different from above.  Given an edge $e= (a_1,\ldots, a_k)$ in $K_{2^N}^k$, where $a_1< \cdots < a_k$, let $\delta_i = \delta(a_i,a_{i+1})$.  Then $\chi(e)=red$ if

$\bullet$ the sequence $\delta(e)$ is monotone and $\phi(\delta_1, \ldots, \delta_{k-1}) =red$, or

$\bullet$ the sequence $\delta(e)$ is a \emph{strong-zigzag}, meaning $\delta_2,\delta_4,\ldots$ are local minimums and $\delta_3,\delta_5,\ldots$ are local maximums, and $\delta_{k-1} < \delta_{k-3}$. In other words,
$$\delta_1 > \delta_2 < \delta_3 > \delta_4 < \cdots \cdots > \delta_{k-3} <\delta_{k-2} > \delta_{k-1} \qquad \hbox{ and } \qquad
 \delta_{k-1}<\delta_{k-3}.$$

Otherwise $\chi(e)=blue$.

Set $m = 4n^2$.  For sake of contradiction, suppose there is an $m$-set $S = \{a_1,\ldots, a_m\}$, where $a_1 < \cdots < a_m$, such that $\chi$ colors every $k$-tuple in $S$ blue.  Let $\delta_i = \delta(a_i,a_{i+1})$ for $1\leq i \leq m-1$ and consider the sequence $\delta(S)$.  By Property C, there is no integer $j$ such that the sequence $\{\delta_i\}_{i = j}^{j  + n -1}$ is monotone, since otherwise $\phi$ colors every $(k-1)$-tuple of the $n$-set $\{\delta_j,\delta_{j+1},\ldots, \delta_{j + n - 1}\}$ blue which is a contradiction.  Therefore, we can assume there are $2n$ consecutive extremums $\delta_{i_1},\delta_{i_2},\cdots, \delta_{i_{2n}}$, such that $\delta_{i_1},\delta_{i_3},\ldots,\delta_{i_{2n-1}}$ are local maximums and  $\delta_{i_2},\delta_{i_4},\ldots,\delta_{i_{2n}}$ are local minimums.  Recall that $\delta_{i_j} = \delta(a_{i_j},a_{i_j + 1})$.  Notice that by Properties A and B (or the Fact), we have $\delta_{i_{2n}} \neq \delta_{i_{2n-2}}$.  Suppose $\delta_{i_{2n}} < \delta_{i_{2n-2}}$, and consider the vertices corresponding the last $(k-1)/2$ local maximums along with $a_{i_{2n} +1}$.  More precisely, the vertices
$$a_{i_{2n - (k - 2)}}, a_{i_{2n - (k - 2)} + 1} , \ldots , a_{i_{2n -3}},  a_{i_{2n - 3} + 1}, a_{i_{2n - 1}},  a_{i_{2n - 1} + 1}, a_{i_{2n}+1}.$$
By the same argument as above, these vertices correspond to a strong-zigzag sequence, and therefore $\chi$ colors these $k$ vertices red and we have a contradiction.  Therefore we can assume $\delta_{i_{2n}} > \delta_{i_{2n-2}}$.  By the same argument, we can conclude that $\delta_{i_{2n - 2}} > \delta_{i_{2n - 4}}$.  After repeating this argument $n$ times, we have
$$\delta_{i_{2n}} > \delta_{i_{2n-2}} > \cdots > \delta_{i_2}.$$
Set $T = \{a_{i_2}, a_{i_4}, \ldots, a_{2n-2}, a_{2n}, a_{2n +1}\}$.  By Property B, $\delta(T)$ is a monotone sequence of length $n$, which implies $\phi$ created a blue clique of size $n$ in $K^{k-1}_N$, contradiction.

Now it suffices to show that there is no red copy of an $H \in \cH_{4}^k$ under $\chi$.  For sake of contradiction, suppose $\chi$ produces a red $H \in \cH_4$.   Let $V(H) = a = \{a_1,\ldots, a_{k + 1}\}$, $a_1 < \cdots < a_{k + 1}$, and $\delta_i = \delta(a_i,a_{i + 1})$.

We follow the same arguments as in Theorem~\ref{stepupeasy}, except we need to replace Case 3.3, since that was the only place where we used the fact that $k$ is even.  This is the case when our four red edges have the form $e_1 = a - a_{k-2}, e_2 = a - a_{k-1}, e_3 = a- a_k, e_4 = a - a_{k + 1}$.  Since $\delta(e_1)$ is strong-zigzag and $k$ is odd, we have $\delta(a_{k-3},a_{k -1}) < \delta_{k-1} > \delta_k$ and $\delta(a_{k-3},a_{k -1}) >  \delta_k$.  For $\delta(e_4)$ to be a strong-zigzag, we must have $\delta_{k-3} < \delta_{k-2} > \delta_{k - 1}$.  By Property B, this implies $\delta(a_{k-3},a_{k - 1}) = \delta_{k-3}$, and therefore $\delta_{k -3} < \delta_{k-1}$, which implies $\delta(e_4)$ is not a strong-zigzag, contradiction. \end{proof}

\emph{Proof of lower bound in Theorem \ref{main}}.  Suppose $k-t + 4$ is even.  Then by a $(t-3)$-fold application of Theorem~\ref{stepupeasy}, along with Theorem \ref{ehimprove}, we have $r_k(\cH_t,n) \geq \twr_{t-1}(c_1 n^{k-t +1})$ where $c_1 = c_1(k,t)$.  If $k- t + 4$ is odd, then by a $(t-4)$-fold application of Theorem \ref{stepupeasy}, along with Theorem \ref{3to4odd} and Theorem \ref{ehimprove}, we have $r_k(\cH_t,n) \geq \twr_{t-1}(c_2 n^{(k-t + 1)/2})$, where $c_2 = c_2(k,t)$.\qed

\end{document}